\definecolor{col1}{rgb}{0.00,0.45,0.74} 
\definecolor{col2}{rgb}{0.85,0.33,0.10} 
\definecolor{col3}{rgb}{0.93,0.69,0.13} 
\definecolor{col4}{rgb}{0.49,0.18,0.56} 
\definecolor{col5}{rgb}{0.47,0.67,0.19} 
\definecolor{col6}{rgb}{0.30,0.75,0.93} 
\pgfplotsset{compat=1.17}
\begin{document}

\title[High-Order Space-Angle-Energy DGFEMs for Boltzmann Transport]{Efficient High-Order Space-Angle-Energy Polytopic Discontinuous Galerkin Finite Element Methods for Linear Boltzmann Transport}

\author[P. Houston \and M. E. Hubbard \and T. J. Radley \and O. J. Sutton \and R. S. J. Widdowson]{Paul Houston \and Matthew E. Hubbard \and Thomas J. Radley \and Oliver J. Sutton \and Richard S.J. Widdowson}

\address[P. Houston]{
School of Mathematical Sciences, University of Nottingham,
University Park, Nottingham NG7 2RD, UK}
\email{Paul.Houston@nottingham.ac.uk}
\address[M. E. Hubbard]{School of Mathematical Sciences, University of Nottingham,
University Park, Nottingham NG7 2RD, UK }
\email{Matthew.Hubbard@nottingham.ac.uk}
\address[T. J. Radley]{School of Mathematical Sciences, University of Nottingham,
University Park, Nottingham NG7 2RD, UK }
\email{Thomas.Radley@nottingham.ac.uk}
\address[O. J. Sutton]{Department of Mathematics,
King's College London,
London,
WC2R 2LS}
\email{Oliver.Sutton@kcl.ac.uk}
\address[R. S. J. Widdowson]{School of Mathematical Sciences, University of Nottingham,
University Park, Nottingham NG7 2RD, UK}
\email{Richard.Widdowson@nottingham.ac.uk}

\maketitle

\begin{abstract}
We introduce an $hp$--version discontinuous Galerkin finite element method (DGFEM) for the linear Boltzmann transport problem. 
A key feature of this new method is that, while offering arbitrary order convergence rates, it may be implemented in an almost identical form to standard multigroup discrete ordinates methods, meaning that solutions can be computed efficiently with high accuracy and in parallel within existing software.
This method provides a unified discretisation of the space, angle, and energy domains of the underlying integro-differential equation and naturally incorporates both local mesh and local polynomial degree variation within each of these computational domains.
Moreover, general polytopic elements can be handled by the method, enabling efficient discretisations of problems posed on complicated spatial geometries. 
We study the stability and $hp$--version {\em a priori} error analysis of the proposed method, by deriving suitable $hp$--approximation estimates together with a novel inf-sup bound.
Numerical experiments highlighting the performance of the method for both polyenergetic and monoenergetic problems are presented.

\textbf{Keywords}: $hp$-finite element methods; discontinuous Galerkin methods; linear Boltzmann transport problem; polytopic meshes; discrete ordinates methods.

\textbf{Mathematics Subject Classification (2020)}: 65N12, 65N15, 65N30.
\end{abstract}

\section{Introduction}

The linear Boltzmann transport problem describes the flow of particles through a scattering and absorbing medium, and is a widely used model in areas as diverse as medical imaging, radiotherapy treatment planning, and the design of nuclear reactors, for example.
Here, we consider the numerical approximation of the stationary form of the problem, seeking a solution which is a function of up to six independent variables: $d$, $d=2,3$, spatial variables varying over a domain in $\Re^d$, $(d-1)$ angular variables on the surface of the $d$-dimensional unit sphere $\angledomain$, and an energy variable on the non-negative real line $\Re_{\geq 0}$. 
The high dimensionality of this problem means that it is imperative to develop efficient numerical approximation methods.
Over the years numerous methods have been proposed for this problem, which we shall briefly review below. 

Given the structure of the underlying problem, the space, angle and energy components of the solution are typically discretised separately using a variety of techniques.
Historically, there has largely been a predominant standard approach to energy discretisation known as the \emph{multigroup approximation}; see~\cite[Chapter 2]{Lewis:1984} and the references cited therein.
Essentially, this approach approximates the energy by a piecewise constant function with respect to a finite number of non-overlapping \emph{energy groups}.
A key appeal of this approach is that the numerical solution is computed by sequentially solving a single monoenergetic Boltzmann transport problem (i.e., only depending on the spatial and angular variables) for each energy group.
This is possible because the scattering process is typically structured in such a way that particles only lose energy in each collision with the medium, either by producing secondary particles or depositing energy locally, and hence the solution in a given energy group only depends on the solution in groups at higher energies, cf., also \cite{Hall:2017}.

On the other hand, discretisations of the angular component of the solution have a rich history and numerous numerical schemes have been proposed.
A few classes of such schemes have received particular attention within the literature due to their numerical properties. 
Spherical harmonic approximations are a widely used form of spectral discretisation in angle, constructed utilising a basis of typically high-order smooth spherical harmonic functions defined globally on the sphere; see \cite{Carson_1947,Fletcher:1983,Lewis:1984}.
The emphasis of such schemes is to simplify the implementation of the scattering operator, typically at the expense of a more expensive implementation of the streaming operator.
Such schemes offer a natural variational setting for their analysis, but the global nature of the basis functions makes local adaptivity a challenging task and Gibbs'-type oscillations may be expected around sharp variations in the solution. 

An alternative strain of methods are collectively known as discrete ordinates methods, in which the angular component of the problem is discretised via collocation at a discrete set of angular quadrature points.
The advantage of this approach is that, when combined with an appropriate linear solver, the Boltzmann transport problem may be solved in parallel as a set of independent linear transport problems in the spatial domain with fixed wind directions.
There appear to be two predominant flavours of discrete ordinates-type methods in the literature, which may be coarsely classified as \emph{global high-order methods} and \emph{local low-order methods}.
Schemes in the former category typically fall within the family of spectral collocation methods, based on sets of interpolatory or quadrature points for high-order spherical harmonic functions on the sphere, designed according to the principles laid out by Sobolev and Vaskevich in \cite{Sobolev:1997}.
Such schemes include those based on the widely used 
level symmetric quadrature formulae in \cite{Lathrop:1965,Carlson:1970,Koch:2004}, 
Lebedev quadrature schemes in \cite{Lebedev:1976,Lebedev:1975},
general double cyclic triangle quadratures in \cite{Koch:1995,Koch:2004}, 
or sets of points arranged on spherical $t$-designs in \cite{Ahrens:2015}, to name but a few.
The appeal of such methods is that they formally approximate the solution using high-order spherical harmonics,
although generating efficient point sets can become difficult for very high-orders, limiting the theoretical accuracy of such schemes.
Moreover, it is typically challenging to produce such point sets adaptively, i.e., to focus quadrature points in zones of the angular domain where higher resolution is required, for instance, around beams or other localised structures present in the underlying solution. 

Complementing these are methods based on quadrature sets constructed locally using an angular mesh.
Typically, the quadrature schemes used are exact for constant functions on each element, such as so-called $T_N$ schemes, cf. \cite{Thurgood:1995}, or sometimes linear or quadratic functions; see \cite{Jarrell:2010,Jarrell:2011,Lau:2016,Lau:2017,Yang:2018}.
In a similar category, we include methods based on interpolation using continuous finite element basis functions in angle, such as those of \cite{Gao:2009}, and schemes incorporating piecewise spherical harmonic approximations on an angular mesh in \cite{Kophazi:2015} and wavelet-based approaches in \cite{Buchan:2005,Adigun:2018}.
Although such schemes formally approximate the solution using lower-order polynomials, the ability to generate a higher fidelity approximation by refining the mesh, either locally or globally, has contributed to their significant popularity. 
Recent work has generalised these schemes to use higher order polynomials in angle in various different ways; see, for example, \cite{Kophazi:2015,Hall:2017,Yang:2018}. 
While such schemes offer the possibility of high-order convergence and mesh adaptivity, underpinned by a variational framework, they can be more challenging to implement efficiently because the high-order nature of the basis functions on each angular element means that the problem does not immediately facilitate a discrete ordinates-like decoupling into independent spatial transport problems.

In this article, we propose a state of the art $hp$-version discontinuous Galerkin finite element method (DGFEM) for the discretisation of the linear Boltzmann transport problem, in which the space, angle, and energy components of the solution are approximated in a unified manner. In many applications, particularly those arising in medical physics, the spatial domain may be highly complicated; to deal with such strong complexity of the physical geometry, in an efficient manner, we admit the use of general polytopic meshes; see, for example, \cite{cangiani2013hp,cangiani2015hp,CaDoGeHo2017} and the references cited therein. The key properties and advantages of the proposed methodology include: the exploitation of a unified DGFEM discretisation of the linear Boltzmann problem over the entire computational domain ensures that the resulting scheme is naturally high-order; note that, in particular, the use of the aforementioned multigroup approximation limits the accuracy of the
 resulting numerical method to first-order. Taking advantage of the intrinsic variational formulation of the scheme means that the convergence and stability analysis of the underlying DGFEM can be developed, which is the key objective of this article. Furthermore, the proposed framework naturally lends itself to the exploitation of $hp$–adaptivity techniques coupled with rigorous a posteriori error estimation to ensure that the spatial, angular, and energy meshes can be focused around solution features of interest. Moreover, as already highlighted above, complex geometries can be efficiently meshed and easily handled. Finally, and perhaps most importantly from a practical viewpoint, the proposed method enables arbitrary order mesh-based approximations to be built independently in each of the space, angle and energy domains, while still being implemented in the same way as conventional multigroup discrete ordinates schemes.
This highly efficient and naturally parallelisable implementation is made possible by exploiting a novel set of basis functions for the polynomial function spaces which satisfy a Lagrangian property at the nodes of a (tensor product) Gaussian quadrature scheme.
We point out that the mathematical convergence results presented in this article complement those of Johnson and Pitk\"aranta in \cite{Johnson:1983}, who derived the first {\em a priori} error estimates for a discrete ordinates DGFEM approximation of the monoenergetic Boltzmann transport problem, albeit under very low regularity assumptions on the analytical solution.

We remark that a very popular alternative computational framework for simulating the linear Boltzmann transport problem are Monte Carlo methods, which are widely used in practice.
Such methods naturally incorporate the stochastic nature of the underlying physical processes and are highly efficient to implement, as the trajectories of individual incoming particles are simulated independently, yet the mean observed behaviour may only be expected to converge with the square root of the number of samples used.
For this reason, B\"orgers in \cite{Borgers:1998} identified that finite element-based methods could expect to perform more efficiently than Monte Carlo-based methods if high-order finite element methods could be utilised in a suitably efficient way. 
Our work therefore provides a stepping stone to answering the open question of how to achieve this objective in practice, as we are able to compute high-order numerical approximations with minimal additional computational overhead compared to conventional multigroup discrete ordinates methods.

The outline for this paper is as follows. In Section~\ref{sec:model} we introduce the linear Boltzmann transport problem. Then in Section~\ref{sec:scheme}, we formulate the unified $hp$--version DGFEM discretisation. 
Section~\ref{Sec:Inverse_and_approx_estimates} introduces the necessary inverse and $hp$--approximation results; on the basis of these bounds, the stability and convergence analysis of the underlying DGFEM is undertaken in Section~\ref{sec:stability}. In Section~\ref{sec:implementation} we outline how the proposed DGFEM may be implemented in a highly efficient and parallelisable manner based on employing a careful selection of the quadrature and local polynomial bases in the angular and energy domains.
The practical performance of the method is assessed in Section~\ref{sec:numerics} through a series of numerical examples. Finally, in Section~\ref{sec:conclusion} we summarise the work presented in this paper and draw some conclusions.

\subsection{Notation}
 
For a bounded open set $\omega \subset \Re^{\spacedim}$, $\spacedim \geq 1$, we write $H^k(\omega)$ to denote the usual Hilbertian Sobolev space of index $k\geq 0$ of real-valued functions defined on $\omega$, endowed with the seminorm $|\cdot|_{H^k(\omega)}$ and norm $\|\cdot\|_{H^k(\omega)}$, as detailed in \cite{Adams:2003}, for example. Furthermore, we let $L_p(\omega)$, $p \in [1,\infty]$, be the standard Lebesgue space on $\omega$, equipped with the norm $\|\cdot\|_{L_p(\omega)}$. Similarly, for a bounded $(\spacedim-1)$--dimensional surface $S$ embedded in $\Re^{\spacedim}$, the spaces $H^k(S)$ are defined in an analogous manner, cf. \cite{Dziuk:2013}, for example.

\section{Model problem}\label{sec:model}
Given an open bounded polyhedral spatial domain $\spacedomain \subset \Re^{\spacedim}$ for $\spacedim = 2$ or $3$, let $\domain = \spacedomain \times \angledomain \times \energydomain$, where $\angledomain = \{ \dir \in \Re^{\spacedim} : \abs{\dir}_2 = 1\}$ denotes the surface of the $\spacedim$-dimensional unit sphere and $\energydomain = \{ \energy \in \Re : \energy \geq 0 \}$ is the real half line.

The linear Boltzmann transport problem reads:
find $u : \domain \to \Re$ such that
\begin{align}
	\dir \cdot \nabla_{\x} u(\x, \dir, \energy) + (\absorption(\x, \dir, \energy) + \totalscattering(\x, \dir, \energy)) u(\x, \dir, \energy)
	 &= 
	 \scattering[u](\x, \dir, \energy) \notag \\
	 & \qquad + f(\x, \dir, \energy) \text{ in } \domain,
	 \notag
	\\
	u(\x, \dir, \energy)
	 &= 
	 \bc(\x, \dir, \energy) \text{ on } \Gamma_{\inflow}, 
	 \label{eq:pde}
\end{align}
where $f,\bc,\absorption, \totalscattering : \domain \to \Re$ are given data terms (discussed further below), $\nabla_{\x}$ is the spatial gradient operator,
and $\Gamma_{\inflow} = \{ (\x, \dir, \energy) \in \bar{\domain} : \x \in \partial \spacedomain \text{ and } \dir \cdot \normal < 0\}$ denotes the inflow boundary of $\domain$, where $\normal$ denotes the unit outward normal vector on the boundary $\partial\spacedomain$ of $\spacedomain$.
The action of the \emph{scattering operator} applied to the solution $u$ is denoted by
\begin{align*}
	\scattering[u](\x, \dir, \energy)
	=
	\int_{\energydomain} \int_{\angledomain} 
	\scatterkernel(\x, \otherdir \to \dir, \otherenergy \to \energy) 
	u(\x, \otherdir, \otherenergy) 
	\d \otherdir \d \otherenergy,
\end{align*}
where $\scatterkernel$ is a specified scattering kernel,
and
$\totalscattering(\x, \dir, \energy) = \int_{\energydomain} \int_{\angledomain} \scatterkernel(\x, \dir \to \otherdir, \energy \to \otherenergy) \d \otherdir \d \otherenergy$.

Physically, the model~\eqref{eq:pde} describes the transport of particles through a scattering medium, and is linear due to the key physical assumption that particles are only scattered by interactions with the medium and do not interact with one another.
The solution $u(\x, \dir, \energy)$ represents the fluence of particles with a particular energy $\energy \in \energydomain$, travelling in direction $\dir \in \angledomain$, passing through the point $\x \in \spacedomain$.
The scattering kernel $\scatterkernel(\x, \otherdir \to \dir, \otherenergy \to \energy)$ represents the proportion of particles at position $\x$ with energy $\otherenergy$ travelling in direction $\otherdir$ which transition to direction $\dir$ and energy $\energy$ as a result of an instantaneous collision with the medium.
Conversely, the reaction coefficient $\absorption(\x, \dir, \energy) + \totalscattering(\x, \dir, \energy)$, commonly referred to as the \emph{total scattering cross section}, models loss of particles from the fluence in direction $\dir$ with energy $\energy$ as they are absorbed by the medium ($\absorption$) or scattered into other directions and energies ($\totalscattering$).

We simplify the model slightly by assuming that the medium is angularly isotropic in the sense that $\absorption(\x, \dir, \energy) \equiv \absorption(\x, \energy)$ and the scattering kernel depends only on the cosine of the angle between the initial and final directions; i.e., $\scatterkernel(\x, \otherdir \to \dir, \otherenergy \to \energy) \equiv \scatterkernel(\x, \otherdir \cdot \dir, \otherenergy \to \energy)$.
This has the implication that $\totalscattering(\x, \dir, \energy) \equiv \totalscattering(\x, \energy)$ by symmetry.
Furthermore, we make the (physically reasonable) assumption that $\scatterkernel(\x, \otherdir \cdot \dir, \otherenergy \to \energy) = 0$ for $\otherenergy < \energy$, which states that particles cannot gain energy by scattering off the medium. 
Finally, we assume that $f$ and $g$ are compactly supported functions of energy, and that there exists a constant $\poscondmin$ such that 
\begin{equation} \label{condition_on_alpha}
	\poscond(\x, \dir, \energy) := \absorption(\x, \dir, \energy) + \frac{1}{2}(\totalscattering(\x, \dir, \energy) -\gamma(\x, \dir, \energy)) \geq \poscondmin > 0,
\end{equation}
where $\gamma(\x, \dir, \energy) = \int_{\energydomain} \int_{\angledomain} \scatterkernel(\x, \otherdir \to \dir, \otherenergy \to \energy) \d \otherdir \d \otherenergy$.
For notational simplicity, henceforth we will suppress the dependence of the data terms $\absorption, \totalscattering, f$ and $\bc$ on the independent variables.

\begin{remark}
	In practice the absorption cross section $\absorption$ may be equal to zero; hence, in this setting, condition \eqref{condition_on_alpha} reduces to the requirement that $\totalscattering - \gamma\geq c_0^{\prime}>0$, $c_0^{\prime} = 2 c_0$, or more precisely that the macroscopic scattering cross section related to outgoing directions and energies ($\totalscattering$) is greater than the corresponding quantity related to the incoming directions and energies ($\gamma$). An important scattering model employed in practice for photons is the Klein-Nishina scattering model, discussed in Section~\ref{sec:numerics}; one can show that this model does indeed satisfy \eqref{condition_on_alpha} within a physical range of energies; see \cite{radley_thesis_2023} for details.
\end{remark}

\section{Discrete scheme}\label{sec:scheme}
We discretise the Boltzmann transport problem \eqref{eq:pde} using a DGFEM approach, seeking an approximate solution which is a product of discontinuous piecewise polynomial functions with respect to meshes defined in the spatial, angular, and energy domains separately. 
For this, we introduce the following notation.

\subsection{Spatial discretisation}

Let $\spacemesh$ be a subdivision of the spatial domain $\Omega$ into non-overlapping open polytopic elements $\element$ with diameter $h_{\element}$ such that $\overline{\Omega} = \cup \,\overline{ \element}$. The set of faces in $\spacemesh$ will be denoted by $\spacefaces$, which are defined as the $(d-1)$-dimensional planar facets of the elements $\element$ present in the mesh $\spacemesh$. For $d=3$, we assume that each planar face of an element $\element \in \spacemesh$ can be subdivided into a set of co-planar $(d-1)$-dimensional simplices and we refer to this set as the set of faces, as in \cite{CaDoGeHo2017}. 
Given $\element \in \spacemesh$, we denote by $p_{\element} \geq 0$ the polynomial degree on $\element$, and define the vector ${\bf p} := (p_{\element} :  \element \in \spacemesh)$. 
The spatial finite element space is then defined by
\begin{align*}
	\spacespace &= \{ v \in L_2(\spacedomain) : v|_{\element} \in \mathbb{P}_{p_{\element}}(\element) \text{ for all } \element \in \spacemesh \},
\end{align*}
where $\mathbb{P}_{k}(\element)$ denotes the space of polynomials of total degree $k$ on $\element$.
We denote by $\partial \element$ the union of $(d-1)$--dimensional open faces of 
the element $\element$. Then, for a given direction $\dir\in \angledomain$ the inflow and outflow parts of $\partial \element$ are defined as
\begin{align*}
\inflowelement &=\{ \x\in \partial\element : ~ \dir \cdot \normal_{\element}(\x)<0\} ,  \\
\outflowelement &=\{\x\in \partial\element : ~ \dir \cdot \normal_{\element}(\x) \geq 0\},
\end{align*}
respectively, where $\normal_{\element}(\x)$ denotes the unit outward normal vector to $\partial \element$ 
at $\x \in \partial\element$. 

Given $\element\in\spacemesh$, the trace of a (sufficiently smooth) function $v$ on $\inflowelement$ from $\element$
 is denoted by $v^+_{\element}$. 
Further, if $\inflowelement \backslash \partial\Omega$ is nonempty, then for $\x \in \inflowelement \backslash \partial\Omega$
there exists a unique $\element^\prime \in \spacemesh$ such that $\x \in \outflowelement^\prime$; with this notation, we
denote by $v^-_{\element}$ the trace of $v |_{\element^\prime}$ on $\inflowelement \backslash \partial\Omega$.
Hence the upwind jump of the function $v$ across a face $F \subset \inflowelement \backslash \partial\Omega$ is denoted by
\begin{equation*}
\ujump{v}:=v^+_{\element}-v^-_{\element} .
\end{equation*}
In the remainder of the article we suppress the subscript $\element$, since it will always be clear which element $\element\in\spacemesh$ the quantities $v_{\element}^\pm$ correspond to.

\subsection{Angular discretisation} \label{sec:angulare_discretisation}

A general framework developed for solving partial differential equations on surfaces has been developed in \cite{dgfem_surface_pdes_2015,Demlow2009HigherOrderFE,Dziuk:2013} and the references cited therein. 
Given that 
our particular setting is greatly simplified, we proceed in a slightly different manner. 
Let $\approxangledomain$ to denote a polyhedral surface in $\Re^d$ composed of (closed)
planar faces  $\approxangleelement$ which are assumed to be either simplices (intervals if $d=2$; 
triangles if $d=3$) 
or (affine) quadrilaterals (when $d=3$). We write $\approxanglemesh = \{\approxangleelement\}$ to denote the
associated regular conforming triangulation of $\approxangledomain$, i.e., 
$\angledomain_h = \cup_{\approxangleelement \in \approxanglemesh} \approxangleelement$. 
We now introduce a smooth invertible mapping $\phi_\angledomain: \approxangledomain \rightarrow \angledomain$; 
for example, assuming the surface is star-shaped with respect to the origin, we may simply define $\phi_\angledomain(\tilde{\dir}) = \abs{\tilde{\dir}}_{2}^{-1} \tilde{\dir}$, where $|\cdot |_2$ denotes the $l_2$-norm.
With this notation, we define a mesh of curved surface elements defined on $\angledomain$ by
$$
\anglemesh = \left\{ \angleelement: \angleelement = \phi_{\angledomain}(\approxangleelement ) ~
\forall \approxangleelement \in \approxanglemesh \right\}.
$$
Crucially, we assume that elements $\approxangleelement\in \approxanglemesh$ are mapped to 
$\angleelement \in \anglemesh$, without any significant rescaling. More precisely, we assume that the 
determinant of the inverse of the first fundamental form of the mapping 
$\phi_\angledomain: \approxangledomain \rightarrow \angledomain$ is uniformly bounded from above and below, 
cf. \cite{Dziuk:2013}. Following \cite{dgfem_surface_pdes_2015,Demlow2009HigherOrderFE,Dziuk:2013}, in the 
case when $\approxanglemesh$ is composed of simplices, then $\approxangledomain$ may, for example, be chosen to be a piecewise
linear approximation of $\angledomain$, whereby the elements forming $\approxangledomain$ may be 
constructed so that their vertices lie on $\angledomain$. 
In the case when quadrilateral elements are employed, 
then an initial polyhedral domain $\approxangledomain^\prime$ may be constructed in a similar fashion, though in 
general the resulting element domains will not be affine. 
In this setting,  
we assume there exists $\approxangledomain$ consisting of affine quadrilateral elements, in such a manner 
that the corresponding quadrilateral facets of $\approxangledomain^\prime$ and $\approxangledomain$ may be 
mapped to one another without any significant rescaling. 
We stress that, irrespective of the specific choice of $\approxangledomain$, the assumption on scaling of the Jacobian of the mapping 
$\phi_\angledomain: \approxangledomain \rightarrow \angledomain$ is crucial to ensure that Lemma~\ref{Lem:demlow}
holds, see Section~\ref{Sec:Inverse_and_approx_estimates} below.

Since the surface we are interested in discretising is simply the unit sphere in $\Re^d$, a practical choice for $\approxangledomain$ is the surface of the cube $[-1,1]^d$. 
This leads to the widely used cube-sphere discretisation of the sphere, and enables a particularly simplified implementation of the method.

Let $\refangleelement \subset \Re^{d-1}$ denote the reference element 
(either a simplex or quadrilateral), $\phi_{\angleelement}: \refangleelement \rightarrow \approxangleelement$, which is assumed to be affine, and define 
$F_{\angleelement}: \refangleelement \rightarrow \angleelement$ by 
$F_{\angleelement} = \phi_{\angledomain} \circ \phi_{\angleelement}$.
For each $\angleelement\in \anglemesh$, let $q_{\angleelement}\geq 0$ denote the polynomial degree used on $\angleelement$, and introduce ${\bf q}:=(q_{\angleelement}: \angleelement\in \anglemesh)$. 
 The finite element space defined on the surface of the sphere $\angledomain$ is then given by
 \begin{align*}
	\anglespace = \{v \in L_2(\angledomain) : v|_{\angleelement} = \hat{v} \circ F_{\angleelement}^{-1}, ~\hat{v} \in {\mathcal R}_{q_{\angleelement}}(\refangleelement) \text{ for all } \angleelement \in \anglemesh \},
\end{align*}
where ${\mathcal R}_k(\refangleelement) = \mathbb{P}_{k}(\refangleelement)$ if $\refangleelement$ is a simplex and   ${\mathcal R}_k(\refangleelement) = \mathbb{Q}_{k}(\refangleelement)$ if $\refangleelement$ is a square; here $\mathbb{Q}_{k}(\refangleelement)$ denotes the space of tensor product polynomials on $\refangleelement$ of degree $k$ in each coordinate direction.

\subsection{Energy discretisation}
We first restrict the energy domain to be a finite interval by selecting minimum and maximum energy cutoffs $\energy_{\min}$ and $\energy_{\max}$, respectively. Due to the assumption that the problem data is compactly supported in energy and the assumption on the structure of the scattering kernel, these limits may be chosen so that the analytical solution is compactly supported in energy; with a slight abuse of notation we refer to $\energydomain$ to be this restricted domain $(\energy_{\min},\energy_{\max})$.

Then, for $N_{\energydomain} \geq 1$, let $\energy_{\max} = \energy_{0} > \energy_{1} > \ldots > \energy_{N_{\energydomain}-1} > \energy_{N_{\energydomain}} = E_{\min}$ define a partition of the energy domain of the problem into $N_{\energydomain}$ \emph{energy groups}.
We will refer to the interval $\energyelement = (\energy_{g}, \energy_{g-1})$ as energy group $g$, $g = 1, \dots, N_{\energydomain}$, and define $\energymesh = \{ \energyelement \}_{g=1}^{N_{\energydomain}}$. To each energy group $\energyelement$, $g = 1, \dots, N_{\energydomain}$, we associate a polynomial degree $r_g \geq 0$. Defining ${\bf r} = (r_{\energyelement})_{g = 1}^{N_{\energydomain}}$, we introduce the energy finite element space
\begin{align*}
	\energyspace = \{ v \in L_2(\energy_{\min}, \energy_{\max}) : v|_{\energyelement} \in \mathbb{P}_{r_{\energyelement}}(\energyelement ) \text{ for all } \energyelement \in\energymesh \}.
\end{align*}

\subsection{Discontinuous Galerkin scheme}

Employing the definitions introduced in the previous sections, we define the full space-angle-energy mesh by
$$
\mesh = \spacemesh \times\anglemesh \times \energymesh
=\{\kappa: \kappa = \element \times\angleelement\times \energyelement, ~\element\in\spacemesh, ~\angleelement\in\anglemesh, ~\energyelement\in\energymesh\}.
$$
Over the mesh $\mesh$, we combine the separate function spaces defined above to obtain the discretisation space
\begin{align*}
	\discretespace = \spacespace \otimes \anglespace \otimes \energyspace,
\end{align*}
and, for any $\dir \in \angledomain$, let $\graphspace_{\dir, h} = \{ v \in L_2(\spacedomain) : \dir \cdot \nabla_{\x} v|_{\element} \in L_2(\element) \text{ for all } \element \in \spacemesh \}$ denote the broken spatial graph space. 

We define the upwind transport bilinear form $a_{\dir}^{\energy} : \graphspace_{\dir, h} \times \graphspace_{\dir, h} \to \Re$ as
\begin{align*}
	a_{\dir}^{\energy}(w, v) 
	=& 
	\sum_{\element \in \spacemesh} 
	\int_{\element} (\dir \cdot \nabla_\x w v + (\absorption + \totalscattering) w v) \d \x
	 \\
	& - \!\!\!\sum_{\element \in \spacemesh} 
	\int_{\inflowelement\backslash\partial\Omega} \!\! (\dir \cdot \normal_{\element}) \ujump{w} v^+ \d s \\
	& - \!\!\!
	\sum_{\element \in \spacemesh}
	\int_{\inflowelement\cap\partial\Omega} (\dir \cdot \normal_{\element}) w^+ v^+ \d s,
\end{align*}
and further define the scattering bilinear form $s_{\dir}^{\energy} : L_2(\domain) \times L_2(\spacedomain) \to \Re$ and load linear form $\ell_{\dir}^{\energy} : \graphspace_{\dir, h} \to \Re$, respectively, by
\begin{align*}
	s_{\dir}^{\energy}(w, v) = \int_{\spacedomain} \scattering[w](\x, \dir, \energy) v \d \x, 
\end{align*}
and
\begin{align*}
	\ell_{\dir}^{\energy}(v) 
	= 
	\int_{\spacedomain} f w \d \x 
	- 
	\sum_{\element \in \spacemesh}
	\int_{\inflowelement\cap\partial\Omega}
	(\dir \cdot \normal_{\element}) \bc w \d s.
\end{align*}
Finally, we introduce the DGFEM: find $u_h \in \discretespace$ such that
\begin{align}\label{eq:dgScheme}
	b(u_h, v_h) \equiv a(u_h, v_h) - s(u_h, v_h) = \ell(v_h)
\end{align}
for all $v_h \in \discretespace$, where $a, s : \discretespace \times \discretespace \to \Re$ and $\ell : \discretespace \to \Re$ are given, respectively, by
\begin{align*}
	a(w_h, v_h)
	=
	\int_{\energydomain} \int_{\angledomain} 
	a_{\dir}^{\energy}(w_h, v_h)&
	\d \dir \d \energy,
	\qquad
	s(w_h, v_h)
	=
	\int_{\energydomain} \int_{\angledomain} 
	s_{\dir}^{\energy}(w_h, v_h) 
	\d \dir \d \energy,
	\\&
	\ell(v_h)
	=
	\int_{\energydomain} \int_{\angledomain} 
	\ell_{\dir}^{\energy}(v_h) 
	\d \dir \d \energy.
\end{align*}
We note that this scheme is consistent in the sense that if the analytical solution $u$ to \eqref{eq:pde} is sufficiently smooth then 
\begin{align*}
	b(u, v) = \ell(v)
\end{align*}
for all $v \in \discretespace$.

\section{Inverse inequalities and approximation theory} \label{Sec:Inverse_and_approx_estimates}

In this section, we briefly outline the key technical results required to analyse the DGFEM defined in~\eqref{eq:dgScheme}; for further details, we refer to \cite{cangiani2015hp,CaDoGeHo2017,cangiani2013hp}. 
We first introduce some assumptions on the polytopic spatial mesh $\spacemesh$. 

\begin{assumption} \label{Assumption:mesh_regularity}
The subdivision $\spacemesh$ is shape regular in the sense that
there exists a positive constant $C_{\rm shape}$, independent of the mesh parameters, such that:
\begin{equation*}
\forall \element \in\spacemesh , \quad  \frac{h_{\element}}{\rho_{\element}} \leq C_{\rm shape},
\end{equation*}
with $\rho_{\element}$ denoting the diameter of the largest ball contained in $\element$.
\end{assumption} 

\begin{assumption} \label{Assumption:bounded_number_of_faces} 
There exists a positive constant $C_F$, 
independent of the mesh parameters, such that
$$
\max_{\element \in\spacemesh} 
\left(
\mbox{card} 
      \left\{ F \in \spacefaces : 
              F \subset \partial \element \right\}
\right) 
\leq C_F. 
$$
\end{assumption} 

In order to state the following $hp$-version inverse estimates, proved in \cite{cangiani2015hp,cangiani2013hp}, 
which are sharp with respect to $(d-k)$--dimensional, $k = 1,\ldots, d-1$, element facet degeneration, we
first recall the following definition.

\begin{definition}\label{poly_assumption}
Let $\tilde{\spacemesh}$ denote the subset of elements $\element \in\spacemesh$ which can each be covered by at most $m_{\spacemesh}$ shape-regular simplices $K_i$, $i=1,\dots, m_{\spacemesh}$, and
$$
\dist(\element, \partial K_i)>C_{as}\diam(K_i)/p_{\element}^{2},
\text{ with }
|K_i|\ge c_{as} |\element| 
$$ 
for all $i=1,\dots, m_{\spacemesh}$, for some $m_{\spacemesh}\in\mathbb{N}$ and  $C_{as}, c_{as}>0$, independent of $\element$ and $\spacemesh$, where $p_{\element}$ denotes the polynomial degree associated with element $\element$, $\element \in\spacemesh$.
\end{definition}

Next we recall the following definition from \cite{cangiani2015hp}.
\begin{definition}\label{meshes1}
For each element $\element \in \spacemesh$, let $\mathcal{F}_{\flat}^{\element}$ denote the family of all possible $d$--dimensional simplices
contained in $\element$ and having at least one face in common with $\element$. 
The notation $\kappa_{\flat}^F$ will be used to indicate a simplex belonging to  $\mathcal{F}_{\flat}^{\element}$ and sharing the face $F$ with $\element$.
\end{definition}

With this definition, we introduce the mesh parameter $h_{\element}^\bot$ defined by
\begin{equation}\label{h_orth}
h_{\element}^\bot: = \min_{F\subset \partial \element}  \frac{\sup_{\kappa_{\flat}^F\subset\element}  
|\kappa_{\flat}^F|}{|F|} d  \qquad  \forall\element \in \spacemesh , ~~ d=2,3,
\end{equation}
and note that $h_{\element}^\bot \leq h_{\element}$.
This enables us to recall the following inverse inequality, cf. \cite{CaDoGeHo2017} (equation (5.23)).
\begin{lemma}\label{Lem:Inverse_face_to_ele}
Let $\element\in\spacemesh$, $F\subset \partial\element$ denote one of its faces. Then, for each $v\in\mathbb{P}_p(\element)$,
we have the inverse estimate
\begin{equation}\label{inv_est_gen}
\|v\|_{L_2(F)}^2 \le \Cinvface
\frac{p^2}{h_{\element}^\bot}\|v\|_{L_2(\element )}^2,
\end{equation}
where $\Cinvface$ is a positive constant, which depends on the shape regularity of the covering of 
$\element$, if $\element \in \tilde{\spacemesh}$, but is independent of the discretisation parameters.
\end{lemma}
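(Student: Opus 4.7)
The strategy is to reduce the face-to-element trace inequality on the polytopic element $\element$ to a classical $hp$-trace inequality on a simplex sitting inside $\element$. The existence of such a simplex is guaranteed by Definition~\ref{meshes1}: for any face $F \subset \partial \element$, the family $\mathcal{F}_\flat^\element$ is nonempty and contains simplices $\kappa_\flat^F$ that share the face $F$ with $\element$. This simplex plays the role of the ``sliver'' that carries the trace, while the full element $\element$ provides the bulk $L_2$ norm that dominates everything.

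The first ingredient I would invoke is the sharp $hp$-version trace inverse inequality on a simplex: for any $d$-dimensional simplex $S$ with a face $F$, and any polynomial $v \in \mathbb{P}_p(S)$,
\begin{equation*}
\|v\|_{L_2(F)}^2 \le \frac{(p+1)(p+d)}{d}\,\frac{|F|}{|S|}\,\|v\|_{L_2(S)}^2,
\end{equation*}
which is the standard Warburton--Hesthaven estimate. Since $\kappa_\flat^F \subset \element$, the restriction of any $v \in \mathbb{P}_p(\element)$ to $\kappa_\flat^F$ still lies in $\mathbb{P}_p(\kappa_\flat^F)$, and crucially $\|v\|_{L_2(\kappa_\flat^F)} \le \|v\|_{L_2(\element)}$ by monotonicity of the $L_2$-norm with respect to the domain. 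Applying the simplex estimate on $\kappa_\flat^F$ therefore yields
\begin{equation*}
\|v\|_{L_2(F)}^2 \le \frac{(p+1)(p+d)}{d}\,\frac{|F|}{|\kappa_\flat^F|}\,\|v\|_{L_2(\element)}^2.
\end{equation*}

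The final step is to convert the geometric ratio $|F|/|\kappa_\flat^F|$ into the mesh parameter $h_\element^\bot$. By the very definition~\eqref{h_orth}, $h_\element^\bot \le d \sup_{\kappa_\flat^F \subset \element} |\kappa_\flat^F|/|F|$ holds for the specific face $F$ under consideration (since $h_\element^\bot$ takes the minimum over all faces). Hence I may select $\kappa_\flat^F \in \mathcal{F}_\flat^\element$ whose volume realises (or approaches arbitrarily closely) this supremum, giving $|F|/|\kappa_\flat^F| \le d/h_\element^\bot$ after a passage to the limit. Combining this with the previous display and absorbing the bounded factor $(p+1)(p+d)/p^2$ into the constant $\Cinvface$ produces the claimed bound.

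The only real subtlety is ensuring that the single simplex $\kappa_\flat^F$ attached to $F$ is enough to control the trace over $F$ of a polynomial defined on the (possibly very non-convex, arbitrarily polytopic) element $\element$. This works precisely because the trace on $F$ only sees $v|_{\kappa_\flat^F}$, and the bound $\|v\|_{L_2(\kappa_\flat^F)} \le \|v\|_{L_2(\element)}$ is a free inclusion rather than something requiring a shape-regularity assumption on $\element$ itself; the shape-regularity of the covering enters only indirectly (e.g.\ if one wishes to sharpen the constant or combine this with covering-based Markov inequalities, which is why the statement flags the dependence on $\tilde{\spacemesh}$). No finer geometric information about $\element$ beyond the existence of $\kappa_\flat^F$ with near-optimal volume is required.
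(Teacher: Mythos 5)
Your argument is correct and is precisely the derivation behind the cited result: the paper states this lemma without proof, quoting equation (5.23) of \cite{CaDoGeHo2017}, whose proof is exactly your combination of the Warburton--Hesthaven trace inverse inequality on a simplex $\kappa_{\flat}^F$ sharing the face $F$ with $\element$, the trivial inclusion bound $\|v\|_{L_2(\kappa_{\flat}^F)} \le \|v\|_{L_2(\element)}$, and the definition \eqref{h_orth} of $h_{\element}^\bot$. The one point worth flagging is that the factor $(p+1)(p+d)/p^2$ you absorb into $\Cinvface$ is bounded only for $p\ge 1$, so the estimate as written implicitly assumes $p\ge 1$ (for $p=0$ one reads $p^2$ as $(p+1)^2$); this is a feature of the lemma statement rather than a gap in your reasoning.
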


To state the $H^1-L_2$ trace inequality, we need the following further assumption.

\begin{assumption}\label{Assumption:sub-triangulation}
We assume that every polytopic element  $\element\in\spacemesh\backslash\tilde{\spacemesh}$, admits a sub-triangulation 
into at most $n_{\spacemesh}$  shape-regular simplices $\mathfrak{s}_i$, $i=1,2,\dots , n_{\spacemesh}$, such that
$\bar{\element} = \cup_{i=1}^{n_{\spacemesh}} \bar{\mathfrak{s}}_i$ and
$$
|\mathfrak{s}_i| \geq \hat{c} |\element |
$$
for all $i=1,\dots, n_{\spacemesh}$, for some $n_{\spacemesh}\in\mathbb{N}$ and  $\hat{c}>0$, independent of $\element$ and $\spacemesh$.
\end{assumption}

\begin{lemma} [\cite{CaDoGeHo2017} (Lemma 14)] \label{Lem:Inverse_H1_L2}
Given Assumptions \ref{Assumption:mesh_regularity} and \ref{Assumption:sub-triangulation} are satisfied, for each 
$v\in\mathbb{P}_p(\element)$, $\element \in \spacemesh$,
the inverse estimate 
\begin{equation}
\|\nabla_{\x} v\|_{L_2(\element )}^2 \le \Cinvele \frac{p^4}{h_{\element}^2}\|v\|_{L_2(\element)}^2,
\end{equation}
holds, with constant $\Cinvele$ independent of the element diameter $h_{\element}$,
the polynomial order $p$, and the function $v$, but dependent on the shape regularity of the covering of 
$\element$, if $\element \in \tilde{\spacemesh}$, or  
the sub-triangulation of $\element$, if $\element \in \spacemesh \backslash\tilde{\spacemesh}$.
\end{lemma}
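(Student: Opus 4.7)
The plan is to reduce everything to the standard $hp$--inverse inequality $\|\nabla_\x w\|_{L_2(K)}^2 \leq C\, p^4 h_K^{-2}\|w\|_{L_2(K)}^2$ valid for $w\in \mathbb{P}_p(K)$ on a shape-regular simplex $K$ (the classical result obtained by Markov's inequality after mapping to a reference simplex). The mechanism for transferring this bound back to the polytopic element $\element$ differs between the two classes of elements introduced in Assumption~\ref{Assumption:sub-triangulation} and Definition~\ref{poly_assumption}, so I would split the argument into two cases.

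For the sub-triangulation case, $\element \in \spacemesh \setminus \tilde{\spacemesh}$, I take the shape-regular sub-triangulation $\{\mathfrak{s}_i\}_{i=1}^{n_{\spacemesh}}$ of $\element$ guaranteed by Assumption~\ref{Assumption:sub-triangulation}. Since $v|_{\mathfrak{s}_i} \in \mathbb{P}_p(\mathfrak{s}_i)$, I apply the simplicial inverse inequality on each $\mathfrak{s}_i$ and sum. A short volume computation using shape regularity of $\mathfrak{s}_i$, the condition $|\mathfrak{s}_i| \geq \hat{c}|\element|$, and Assumption~\ref{Assumption:mesh_regularity} (which gives $|\element|$ comparable to $h_{\element}^d$) shows that $h_{\mathfrak{s}_i} \geq c\, h_{\element}$ for a constant $c>0$ independent of the discretisation parameters. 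Replacing $h_{\mathfrak{s}_i}^{-1}$ by $h_{\element}^{-1}$ in the bound and absorbing the finite sum into the constant yields the claim.

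For the covering case, $\element \in \tilde{\spacemesh}$, let $\tilde v$ denote the polynomial extension of $v \in \mathbb{P}_p(\element)$ to $\Re^d$, and let $K_1, \dots, K_{m_{\spacemesh}}$ be the shape-regular covering simplices of Definition~\ref{poly_assumption}. For each $i$ I apply the simplicial inverse inequality to $\tilde v|_{K_i} \in \mathbb{P}_p(K_i)$ and use the inclusion $\element \subseteq \cup_i K_i$ to dominate $\|\nabla_\x v\|_{L_2(\element)}^2$ by $\sum_i C p^4 h_{K_i}^{-2}\|\tilde v\|_{L_2(K_i)}^2$. As in the previous case, $h_{K_i} \geq c\, h_\element$ follows from $|K_i| \geq c_{as}|\element|$ together with shape regularity of $K_i$.

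The crux of the argument, and the step I expect to be hardest, is bounding $\|\tilde v\|_{L_2(K_i)}$ by a $p$--independent multiple of $\|v\|_{L_2(\element)}$. This is precisely where the quantitative distance condition $\dist(\element, \partial K_i) > C_{as}\,\diam(K_i)/p_\element^2$ in Definition~\ref{poly_assumption} enters: it is calibrated to the sharp $p^2$ growth rate permitted by Markov-type estimates for polynomials, so that $\tilde v$ cannot grow appreciably from $\element$ into the annular region $K_i\setminus \element$. Once this polynomial stability estimate is invoked, as developed in \cite{cangiani2013hp,cangiani2015hp}, summing over the finite covering gives the stated bound with $\Cinvele$ depending only on the covering or sub-triangulation geometry.
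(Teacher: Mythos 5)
The paper does not actually prove this lemma --- it is recalled verbatim from \cite{CaDoGeHo2017} (Lemma 14) --- so there is no internal proof to compare against, but your two-case argument (standard simplicial inverse inequality summed over the sub-triangulation when $\element\in\spacemesh\setminus\tilde{\spacemesh}$, and polynomial extension over the covering simplices when $\element\in\tilde{\spacemesh}$) is precisely the strategy of the cited reference. Your identification of the $p$-uniform comparison $\|\tilde v\|_{L_2(K_i)}\leq C\|v\|_{L_2(\element)}$ as the crux, made possible by the calibration $\dist(\element,\partial K_i)>C_{as}\diam(K_i)/p_{\element}^{2}$ against the Markov growth rate, is also correct; since that step is itself an isolated lemma in \cite{cangiani2013hp}, deferring its proof is reasonable in a sketch. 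The only minor remark is that in the covering case the needed lower bound $h_{K_i}\geq h_{\element}$ already follows from the inclusion $\element\subset K_i$, without invoking the volume condition or shape regularity.
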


Furthermore we recall the following multiplicative trace inequality, see \cite{cangiani2013hp}, but written in a slightly different form, see \cite{cangiani2015hp}.
\begin{lemma} \label{Lem:trace_inequality}
	For $v\in H^1(\element)$, $\element\in\spacemesh$, given $F\subset\partial\element$, the following bound holds
	\begin{align*}
		\norm{v}^2_{L_2(F)} \leq \frac{C_T}{h_{\element}^\bot} 
		\left(
		\norm{v}^2_{L_2(\element)} + h_{\element} \norm{v}_{L_2(\element)} \norm{\nabla_\x v}_{L_2(\element)}
		\right),
	\end{align*}
	where $C_T$ is a positive constant which is independent of the element diameter $h_{\element}$.
\end{lemma}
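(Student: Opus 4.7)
The plan is to reduce to a sub-simplex of $\element$ via Definition~\ref{meshes1} and then obtain the multiplicative trace bound on that simplex by applying the divergence theorem to a carefully chosen vector field, exploiting the geometric structure of a simplex to kill the boundary integrals on all faces except $F$.

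First I would invoke Definition~\ref{meshes1} to select a simplex $\kappa_\flat^F \subset \element$ sharing the face $F$ with $\element$, chosen so that $|\kappa_\flat^F|/|F|$ is arbitrarily close to the supremum in~\eqref{h_orth}; by that definition this ratio satisfies $d\,|\kappa_\flat^F|/|F| \geq (1-\varepsilon) h_\element^\bot$ for $\varepsilon>0$ small. Let $\mathbf{x}_0$ denote the vertex of $\kappa_\flat^F$ opposite to $F$ and introduce the vector field $\mathbf{c}(\mathbf{x}) = \mathbf{x} - \mathbf{x}_0$, which satisfies $\nabla_\x \cdot \mathbf{c} = d$ and $|\mathbf{c}(\mathbf{x})| \leq \diam(\kappa_\flat^F) \leq h_\element$ for all $\mathbf{x} \in \kappa_\flat^F$.

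Next I would apply the divergence theorem to the vector field $v^2 \mathbf{c}$ on $\kappa_\flat^F$. The key geometric observation is that every face of $\kappa_\flat^F$ other than $F$ contains the vertex $\mathbf{x}_0$, so $\mathbf{c}$ is tangent to such faces and the boundary integral collapses to a contribution only from $F$. Moreover, on $F$ the quantity $\mathbf{c}\cdot\normal_F$ is constant and equals the height of the simplex, namely $d\,|\kappa_\flat^F|/|F|$. This yields the identity
\begin{equation*}
d\frac{|\kappa_\flat^F|}{|F|} \int_F v^2 \d s
= \int_{\kappa_\flat^F}\!\bigl(d\, v^2 + 2\, v\, \mathbf{c}\cdot\nabla_\x v\bigr) \d \x .
\end{equation*}

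From here it is routine: I would bound $|\mathbf{c}| \leq h_\element$, apply the Cauchy-Schwarz inequality to the cross term, and use $\kappa_\flat^F \subset \element$ to replace $L_2(\kappa_\flat^F)$-norms by $L_2(\element)$-norms, then divide through by $d|\kappa_\flat^F|/|F|$ and use the estimate $|F|/(d|\kappa_\flat^F|) \leq C/h_\element^\bot$ coming from~\eqref{h_orth}. The main subtlety, rather than the algebraic manipulation, is the geometric justification of the vanishing of the boundary contributions on the faces of $\kappa_\flat^F$ other than $F$, together with the correct identification of the height as $d|\kappa_\flat^F|/|F|$; once these geometric facts are in hand the bound follows immediately with $C_T$ depending only on $d$.
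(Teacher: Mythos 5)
Your proposal is correct: the divergence-theorem argument applied to $v^2(\mathbf{x}-\mathbf{x}_0)$ on a near-supremal simplex $\kappa_\flat^F$ from Definition~\ref{meshes1}, with the boundary terms vanishing on the faces through $\mathbf{x}_0$ and the height identified as $d|\kappa_\flat^F|/|F|$, is exactly the standard proof of this multiplicative trace inequality. The paper itself does not prove the lemma but simply recalls it from \cite{cangiani2013hp,cangiani2015hp}, and your argument is essentially the one given in those references, so there is nothing to add beyond the routine density remark justifying the divergence theorem for $v\in H^1$.
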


We now turn our attention to deriving suitable $hp$--version approximation results on each of the finite element spaces $\spacespace$, $\anglespace$ and $\energyspace$. Starting with the spatial finite element space $\spacespace$, we first introduce the following covering of the mesh $\spacemesh$, see \cite{cangiani2013hp}.

\begin{definition}\label{def:mesh_covering}
A (typically overlapping)
\emph{covering} $\mathcal{T}_{\sharp}  = \{ \mathcal{K} \}$ 
related to the polytopic mesh $\spacemesh$ is a   
set of shape-regular $d$--simplices  $\mathcal{K}$, such that for each $\element\in\spacemesh$, there exists a $\mathcal{K}\in\mathcal{T}_{\sharp}$,
with $\element \subset\mathcal{K}$. Moreover, we assume there exists a covering such that 
$
\diam(\mathcal{K})\le C_{\diam} h_{\kappa},
$
for each pair $\element\in\spacemesh$, $\mathcal{K}\in\mathcal{T}_{\sharp}$, with $\element \subset\mathcal{K}$, for a constant $C_{\diam}>0$, uniformly with respect to the meshsize.
\end{definition}

Furthermore, we introduce the following extension operator from \cite{St1970} (Theorem 5) and \cite{sauter_1996} (Theorem 3).
 
\begin{theorem} \label{thm-extension}
Let $D$ be a domain with mi\-ni\-mally smooth boundary. Then, there exists a linear extension
operator $\mathfrak{E}:H^s(D) \rightarrow H^s({\mathbb R}^d)$, $s \in {\mathbb N}_0\equiv\left\{0,1,2,\ldots\right\}$, such that
$\mathfrak{E}v|_{D}=v$ and
$$
\| \mathfrak{E} v \|_{H^s({\mathbb R}^d)} \leq C_{\mathfrak{E}} \| v \|_{H^s(D)},
$$
where $C_{\mathfrak{E}}$ is a positive constant depending only on $s$ and parameters which characterize the boundary $\partial D$.
\end{theorem}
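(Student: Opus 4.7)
The plan is to prove this along Stein's classical lines: first build an extension from a half-space, then globalise by a partition of unity subordinate to a local straightening of the boundary.

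First I would treat $D = \Re^d_+ = \{(x', x_d) : x_d > 0\}$. For a fixed $s$, a Hestenes-type reflection
\[
(\mathfrak{E}v)(x', x_d) = \sum_{k=1}^{s+1} c_k\, v(x', -k x_d) \quad \text{for } x_d < 0,
\]
with coefficients $c_k$ chosen to solve the Vandermonde system $\sum_{k=1}^{s+1} c_k (-k)^j = 1$ for $j = 0, 1, \ldots, s$, ensures that all derivatives up to order $s$ match across $\{x_d = 0\}$ in the trace sense, and a change of variable $x_d \mapsto -k x_d$ yields $\|\mathfrak{E}v\|_{H^s(\Re^d)} \leq C_s \|v\|_{H^s(\Re^d_+)}$.

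Next I would reduce the general case to the previous one. Since $\partial D$ is minimally smooth, by definition there exists a locally finite open cover $\{U_i\}_{i \geq 1}$ of a neighbourhood of $\partial D$ with uniformly bounded overlap, such that after a rigid motion $U_i \cap D = \{(y', y_d) : y_d > \varphi_i(y')\}$ for a Lipschitz function $\varphi_i$ with uniformly bounded Lipschitz constant. The bi-Lipschitz straightening $\Phi_i(y', y_d) = (y', y_d - \varphi_i(y'))$ sends $U_i \cap D$ into $\Re^d_+$. Taking $U_0 \Subset D$ covering the rest of $D$ and a smooth partition of unity $\{\chi_i\}_{i \geq 0}$ subordinate to $\{U_i\}$, I would set
\[
\mathfrak{E}v = \chi_0 v + \sum_{i \geq 1} \chi_i \cdot \Bigl(\mathfrak{E}_0\bigl[(\chi_i v) \circ \Phi_i^{-1}\bigr]\Bigr) \circ \Phi_i,
\]
where $\mathfrak{E}_0$ denotes the half-space extension. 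The bounded overlap ensures that only finitely many terms contribute at each point, and summing the local $H^s$ bounds gives the required global estimate.

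The main obstacle is that Lipschitz straightenings do not preserve $H^s$ for $s \geq 2$, so the naive reflection is insufficient for general $s \in \mathbb{N}_0$. This is precisely Stein's contribution: replace the Hestenes reflection with the higher-order integral extension
\[
(\mathfrak{E}_0 v)(x', x_d) = \int_1^\infty v(x', -\lambda x_d)\, \psi(\lambda)\, d\lambda, \qquad x_d < 0,
\]
where $\psi$ satisfies the moment conditions $\int_1^\infty \lambda^k \psi(\lambda)\, d\lambda = 1$ for all $k \in \mathbb{N}_0$ and decays sufficiently rapidly at infinity (Stein constructs such a $\psi$ explicitly in \cite{St1970}). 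Coupled with a regularisation of the $\varphi_i$ by mollification at a scale proportional to the distance to $\partial D$, this produces a single operator bounded simultaneously on every $H^s$, with constant depending only on $s$, the uniform Lipschitz constant of the $\varphi_i$, and the overlap of the cover; Sauter's treatment in \cite{sauter_1996} makes the quantitative dependence on these boundary parameters explicit.
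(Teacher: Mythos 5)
The paper does not actually prove this result: it is recalled verbatim from \cite{St1970} (Theorem~5) and \cite{sauter_1996} (Theorem~3), so there is no internal argument to compare against. Your outline reproduces the architecture of Stein's proof --- a one-parameter family of dilated translates integrated against a kernel $\psi$ with prescribed moments, a regularised distance to cope with the mere Lipschitz regularity of the boundary graphs, and a bounded-overlap partition of unity to globalise --- and it correctly isolates the central obstruction, namely that bi-Lipschitz straightening does not preserve $H^s$ for $s \geq 2$.

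There are, however, two concrete slips as written. First, the moment conditions are misstated: for $\int_1^\infty v(x',-\lambda x_d)\,\psi(\lambda)\,d\lambda$ to match all normal derivatives up to order $s$ across $\{x_d=0\}$ one needs $\int_1^\infty (-\lambda)^k \psi(\lambda)\,d\lambda = 1$ for $k=0,\dots,s$, i.e.\ alternating signs; in Stein's own normalisation (extension along $x_d + \lambda\,\delta^*(x)$ with $\delta^*$ the regularised distance) the conditions are $\int_1^\infty \psi(\lambda)\,d\lambda = 1$ and $\int_1^\infty \lambda^k \psi(\lambda)\,d\lambda = 0$ for $k\geq 1$. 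The condition you wrote, all moments equal to $1$, does not yield derivative matching. Second, the assembled operator $\chi_0 v + \sum_i \chi_i\,\bigl(\mathfrak{E}_0[(\chi_i v)\circ\Phi_i^{-1}]\bigr)\circ\Phi_i$ restricts on $D$ to $\chi_0 v + \sum_i \chi_i^2 v \neq v$, so the defining property $\mathfrak{E}v|_D = v$ fails; you must either drop the inner cut-off (extend $v|_{U_i\cap D}$ itself and let the outer $\chi_i$ localise) or normalise so that $\sum_i \chi_i^2 = 1$. Relatedly, Stein does not straighten the boundary at all: the local operator is defined directly on the special Lipschitz domain $\{y_d > \varphi_i(y')\}$ via the regularised distance, precisely to avoid composing with a map that is only Lipschitz, so retaining the $\Phi_i$ in the final formula reintroduces the loss of regularity you yourself identified. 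With these repairs your sketch is the standard argument of the cited references, including the quantitative dependence of $C_{\mathfrak{E}}$ on the Lipschitz constants and the overlap of the cover.
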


With this notation we recall the approximation result from \cite{cangiani2013hp} (Theorem~4.2).

\begin{lemma}\label{Lem:spatial_approximation}
Let $\element\in\spacemesh$ and $\mathcal{K}\in\mathcal{T}_{\sharp}$ denote  the corresponding simplex 
such that $\element \subset\mathcal{K}$, cf. Definition~\ref{def:mesh_covering}.
Suppose that $v\in L_2(\Omega)$ is such that $\mathfrak{E} v|_{\mathcal{K}}\in H^{l_{\element}}(\mathcal{K})$, for some $l_{\element}\ge 0$. Then, there exists $\Pi_{\spacedomain} v$, such that
$\Pi_{\spacedomain} v|_{\element} \in \mathbb{P}_{p_{\element}}(\element )$, and the
following bound holds
\begin{equation}\label{approxH_k}
\norm{v - \Pi_{\spacedomain} v}_{H^m(\element )}
\le C \frac{h_{\element}^{s_{\element}-m}}{p_{\element}^{l_{\element}-m}}\norm{ \mathfrak{E}v}_{H^{l_{\element}}(\mathcal{K} )},\quad l_{\element}\ge 0,
\end{equation}
for $0\le m\le l_{\element}$.
Here, $s_{\element}=\min\{p_{\element}+1, l_{\element}\}$ and $C$ is a positive constant, 
that depends on the shape-regularity of ${\mathcal{K}}$, but is
independent of $v$, $h_{\element}$, and $p_{\element}$.
\end{lemma}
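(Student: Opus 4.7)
The plan is to leverage the covering $\mathcal{T}_{\sharp}$ introduced in Definition~\ref{def:mesh_covering} to reduce the non-standard $hp$--approximation problem on the polytopic element $\element$ to a classical one on a shape-regular simplex. The key idea is that even though $\element$ itself may be highly anisotropic or have many faces, the simplex $\mathcal{K} \supset \element$ is shape-regular with $\diam(\mathcal{K}) \le C_{\diam} h_\element$, so existing $hp$--approximation theory applies on $\mathcal{K}$ and then simply restricts back to $\element$.

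First I would invoke the extension operator $\mathfrak{E}$ from Theorem~\ref{thm-extension} to produce $\mathfrak{E}v \in H^{l_\element}(\mathbb{R}^d)$; by hypothesis $\mathfrak{E}v|_{\mathcal{K}} \in H^{l_\element}(\mathcal{K})$. Since $\mathcal{K}$ is a shape-regular $d$--simplex, I can apply a standard $hp$--version polynomial approximation result on simplices (of Babu\v{s}ka-Suri type, see for example the results underlying \cite{cangiani2013hp}): there exists $\hat{\Pi} \in \mathbb{P}_{p_\element}(\mathcal{K})$ such that
\begin{equation*}
\| \mathfrak{E} v - \hat{\Pi} \|_{H^m(\mathcal{K})} \le C\, \frac{\diam(\mathcal{K})^{s_\element - m}}{p_\element^{l_\element - m}}\, \| \mathfrak{E} v \|_{H^{l_\element}(\mathcal{K})},
\end{equation*}
with $s_\element = \min\{p_\element + 1, l_\element\}$ and $C$ depending only on the shape-regularity of $\mathcal{K}$.

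Next I would define the sought approximation on $\element$ by restriction: $\Pi_{\spacedomain} v|_{\element} := \hat{\Pi}|_{\element}$. Since $\element \subset \mathcal{K}$, the restriction of the polynomial $\hat{\Pi} \in \mathbb{P}_{p_\element}(\mathcal{K})$ lies in $\mathbb{P}_{p_\element}(\element)$, as required. Using $\mathfrak{E}v|_\element = v|_\element$ and the monotonicity of the $H^m$ seminorms with respect to the integration domain (as $\element \subset \mathcal{K}$), one has
\begin{equation*}
\| v - \Pi_{\spacedomain} v \|_{H^m(\element)} = \| \mathfrak{E} v - \hat{\Pi} \|_{H^m(\element)} \le \| \mathfrak{E} v - \hat{\Pi} \|_{H^m(\mathcal{K})},
\end{equation*}
and the conclusion follows after invoking $\diam(\mathcal{K}) \le C_{\diam}\, h_\element$ from Definition~\ref{def:mesh_covering} to replace $\diam(\mathcal{K})^{s_\element - m}$ by $C\, h_\element^{s_\element - m}$.

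The principal difficulty, at least conceptually, is already absorbed into the construction of the covering $\mathcal{T}_{\sharp}$ and into the classical simplex-based $hp$--approximation result: once these are available, the argument is essentially a restriction argument. What needs care in the write-up is that the generic constant $C$ in the final estimate depends on $C_{\diam}$ and on the shape-regularity of $\mathcal{K}$, but not on $h_\element$, $p_\element$, or on the (possibly irregular) geometric details of $\element$ itself; thus the estimate is genuinely optimal in $h_\element$ and $p_\element$ despite the polytopic nature of the underlying mesh.
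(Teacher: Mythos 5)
Your argument is correct and is precisely the one behind the cited result (\cite{cangiani2013hp}, Theorem~4.2), which the paper simply recalls without reproving: extend with $\mathfrak{E}$, apply the classical Babu\v{s}ka--Suri $hp$--approximation bound on the shape-regular simplex $\mathcal{K}$, restrict to $\element$, and use $\diam(\mathcal{K})\le C_{\diam}h_{\element}$. Note that for the stated bound \eqref{approxH_k} you do not even need the boundedness of $\mathfrak{E}$, since the right-hand side is already expressed in terms of $\norm{\mathfrak{E}v}_{H^{l_{\element}}(\mathcal{K})}$; that boundedness only enters when passing to the simplified form \eqref{approxH_k_2}.
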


A careful inspection of the proof of Theorem~\ref{thm-extension} reveals that the constant 
$C_{\mathfrak{E}}$ is independent of the measure of the underlying domain $D$, cf. \cite{AHPS_2020}.
Hence, employing Theorem~\ref{thm-extension}, the bound \eqref{approxH_k} 
given in
Lemma~\ref{Lem:spatial_approximation} may be stated in the following simplified form:
\begin{equation}\label{approxH_k_2}
\norm{v - \Pi_{\spacedomain} v}_{H^m(\element )} 
\le C \frac{h_{\element}^{s_{\element}-m}}{p_{\element}^{l_{\element}-m}}\norm{ v}_{H^{l_{\element}}(\element )},\quad l_{\element}\ge 0,
\end{equation}
for $0\le m\le l_{\element}$,
and therefore the condition placed on the amount of overlap of the simplices $\mathcal{K}$ in \cite{cangiani2015hp,CaDoGeHo2017,cangiani2013hp} is not required.

To construct a projection operator onto the angular finite element space $\anglespace$, some care 
is required to account for the curvature of $\angledomain$; for completeness we recall the key steps.
Under our assumptions on the mapping 
$\phi_\angledomain: \approxangledomain \rightarrow \angledomain$, 
we first recall the following result from \cite{dgfem_surface_pdes_2015,Demlow2009HigherOrderFE}.

\begin{lemma} \label{Lem:demlow}
Let $v \in H^j (\angleelement)$, $j\geq 0$; then writing $\tilde{v} = v \circ \phi_{\angledomain}$, we have that
\begin{align*}
	\frac{1}{C} \| v \|_{L_2(\angleelement )} \leq & \| \tilde{v} \|_{L_2(\approxangleelement )}
	\leq C \| v \|_{L_2(\angleelement )}, 
	\quad \text{ and } \quad
	|\tilde{v}|_{H^j(\approxangleelement )} \leq C \|v\|_{H^j(\angleelement )},
\end{align*}
where $C$ is a positive constant, which is independent of the meshsize $h_{\angleelement}$.
\end{lemma}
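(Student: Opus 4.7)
The plan is to prove both estimates by a direct change-of-variables argument, making essential use of the standing assumption that the first fundamental form of $\phi_{\angledomain}: \approxangledomain \to \angledomain$ has a determinant that is uniformly bounded above and below independently of $h_{\angleelement}$.

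For the $L_2$-equivalence, I would pull the surface integral on $\angleelement$ back to $\approxangleelement$ via $\phi_{\angledomain}$, writing
\[
\|v\|_{L_2(\angleelement)}^2 = \int_{\approxangleelement} |\tilde{v}|^2 J_{\angledomain}\, \d\tilde{\sigma},
\]
where $J_{\angledomain}$ denotes the area element induced by $\phi_{\angledomain}$, i.e., the square root of the determinant of the first fundamental form. The hypothesis on $\phi_{\angledomain}$ yields constants $0 < c \leq C$, depending only on $\phi_{\angledomain}$, such that $c \leq J_{\angledomain} \leq C$ uniformly on $\approxangleelement$, and the two-sided $L_2$ bound follows immediately.

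For the $H^j$ seminorm estimate, I would work locally and apply the chain rule (or, for $j \geq 2$, the Faà di Bruno formula) to express $D^{\alpha} \tilde{v}$, for any multi-index $\alpha$ with $|\alpha| \leq j$, as a finite linear combination of (pullbacks through $\phi_{\angledomain}$ of) intrinsic surface derivatives $D^{\beta}_{\angledomain} v$ of order $|\beta| \leq j$, with coefficients that are polynomial expressions in the derivatives of $\phi_{\angledomain}$ up to order $j$. Since $\phi_{\angledomain}$ is a fixed smooth mapping on the compact surface $\approxangledomain$, all such coefficients are bounded uniformly by constants depending only on $\phi_{\angledomain}$ and $j$. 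Squaring, integrating over $\approxangleelement$, and using the upper bound on $J_{\angledomain}$ once more to return to an integral over $\angleelement$, then gives
\[
|\tilde{v}|_{H^j(\approxangleelement)}^2 \leq C \sum_{m=0}^{j} |v|_{H^m(\angleelement)}^2 \leq C \|v\|_{H^j(\angleelement)}^2,
\]
as required.

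The main subtlety is the careful handling of higher-order derivatives on the surface: on the flat element $\approxangleelement$ we use ordinary Euclidean derivatives of $\tilde{v}$, while on the curved element $\angleelement$ the Sobolev seminorm is understood intrinsically via tangential derivatives. Matching these requires a local parametrisation argument and some bookkeeping for the interaction between the pullback $\phi_{\angledomain}^{*}$ and the surface gradient, which is precisely the technical framework developed in \cite{Demlow2009HigherOrderFE,dgfem_surface_pdes_2015,Dziuk:2013}. Since our $\phi_{\angledomain}$ is smooth with uniformly bounded derivatives of all relevant orders, these tools apply in the simplified form that yields constants depending only on $\phi_{\angledomain}$ and $j$, and in particular independent of the meshsize $h_{\angleelement}$.
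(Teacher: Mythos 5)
Your argument is correct and is essentially the standard one: the paper itself states Lemma~\ref{Lem:demlow} without proof, simply recalling it from \cite{dgfem_surface_pdes_2015,Demlow2009HigherOrderFE}, where precisely this change-of-variables argument (uniform two-sided bounds on the area element of $\phi_{\angledomain}$ for the $L_2$ equivalence, chain rule/Fa\`a di Bruno with bounded derivatives of the mapping for the seminorm estimate) is carried out. The one point worth flagging is that for $j \geq 2$ your argument needs uniform bounds on derivatives of $\phi_{\angledomain}$ up to order $j$, which goes slightly beyond the paper's explicitly stated hypothesis on the first fundamental form, but this holds for the radial and cube-sphere projections actually employed.
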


Employing $hp$--approximation results for standard shaped elements, we recall the following result from \cite{Babuska-Suri:RAIRO:1987,schwab}.
\begin{lemma}\label{Lem:approximation} 
Suppose that $\mathfrak{K}$ is a $d$--simplex or $d$--parallelepiped of diameter
$h_{\mathfrak{K}}$. Suppose further that $v|_{\mathfrak{K}} \in H^{l_{\mathfrak{K}}}(\mathfrak{K})$, 
$l_{\mathfrak{K}} \geq 0$.
Then, there exists $\hat{\Pi}_{p_{\mathfrak{K}}}v$
in $\mathcal{R}_{p_{\mathfrak{K}}}(\mathfrak{K})$, $p_{\mathfrak{K}} = 1,2, \dots,$
such that for $0 \leq m \leq l_{\mathfrak{K}}$,
\[ \|v - \hat{\Pi}_{p_{\mathfrak{K}}} v\|_{H^m(\mathfrak{K})}
\leq C \frac{h_{\mathfrak{K}}^{s_{\mathfrak{K}}-m}}{p_{\mathfrak{K}}^{l_{\mathfrak{K}}-m}}
\|v\|_{H^{l_{\mathfrak{K}}}(\mathfrak{K})},
\]
where $s_{\mathfrak{K}} = \min\{p_{\mathfrak{K}}+1, l_{\mathfrak{K}}\}$ and $C$ is a
positive constant, independent of $v$ and the discretisation
parameters.
\end{lemma}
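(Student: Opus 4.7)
The plan is the classical reduction to a fixed reference element followed by a rescaling argument. Let $\hat{\mathfrak{K}}$ denote a fixed reference $d$-simplex (or $d$-parallelepiped) of unit diameter, and let $F_{\mathfrak{K}}:\hat{\mathfrak{K}}\to\mathfrak{K}$ be an affine diffeomorphism; under Assumption~\ref{Assumption:mesh_regularity}-type shape regularity, the Jacobian of $F_{\mathfrak{K}}$ satisfies $\|DF_{\mathfrak{K}}\|\sim h_{\mathfrak{K}}$, $\|DF_{\mathfrak{K}}^{-1}\|\sim h_{\mathfrak{K}}^{-1}$, and $|\det DF_{\mathfrak{K}}|\sim h_{\mathfrak{K}}^d$. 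Writing $\hat{v} = v\circ F_{\mathfrak{K}}$, I would define the projection by pull-back, i.e.\ $\hat{\Pi}_{p_{\mathfrak{K}}} v = (\hat{\pi}_{p_{\mathfrak{K}}}\hat{v})\circ F_{\mathfrak{K}}^{-1}$, where $\hat{\pi}_{p_{\mathfrak{K}}}$ is a suitable reference-element projection onto $\mathcal{R}_{p_{\mathfrak{K}}}(\hat{\mathfrak{K}})$. The map $F_{\mathfrak{K}}$ being affine ensures that polynomial spaces are preserved, so $\hat{\Pi}_{p_{\mathfrak{K}}} v\in\mathcal{R}_{p_{\mathfrak{K}}}(\mathfrak{K})$.

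Second, on the reference element I would invoke the pure $p$-approximation estimate of Babu\v{s}ka--Suri (for simplices, built from Jacobi-weighted orthogonal expansions together with a stable lifting of traces) and of Schwab (for parallelepipeds, constructed from tensor products of one-dimensional Legendre-based projections). Both yield, for $\hat{v}\in H^{l_{\mathfrak{K}}}(\hat{\mathfrak{K}})$ and $0\leq m\leq l_{\mathfrak{K}}$, the bound
\begin{equation*}
\|\hat{v}-\hat{\pi}_{p_{\mathfrak{K}}}\hat{v}\|_{H^m(\hat{\mathfrak{K}})}
\leq C\,p_{\mathfrak{K}}^{-(l_{\mathfrak{K}}-m)}\,\|\hat{v}\|_{H^{l_{\mathfrak{K}}}(\hat{\mathfrak{K}})},
\end{equation*}
where $C$ depends only on $\hat{\mathfrak{K}}$ and $l_{\mathfrak{K}}$, but not on $p_{\mathfrak{K}}$. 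The exponent $s_{\mathfrak{K}}-m$ in the final result reflects the fact that the polynomial space only resolves regularity up to order $p_{\mathfrak{K}}+1$, so for $l_{\mathfrak{K}}>p_{\mathfrak{K}}+1$ one truncates by replacing $l_{\mathfrak{K}}$ by $p_{\mathfrak{K}}+1$ in the $h$-exponent while retaining the full $p$-factor.

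Finally, I would apply the standard scaling bounds for Sobolev seminorms under affine maps: for any $k\geq 0$,
\begin{equation*}
|w|_{H^k(\mathfrak{K})}\leq C\,h_{\mathfrak{K}}^{-k}\,|\det DF_{\mathfrak{K}}|^{1/2}\,|\hat{w}|_{H^k(\hat{\mathfrak{K}})},
\qquad
|\hat{w}|_{H^k(\hat{\mathfrak{K}})}\leq C\,h_{\mathfrak{K}}^{k}\,|\det DF_{\mathfrak{K}}|^{-1/2}\,|w|_{H^k(\mathfrak{K})}.
\end{equation*}
Summing over $0\leq k\leq m$ on the left and combining with the reference estimate applied to $\hat{v}$ (and then the inverse scaling for $\|\hat{v}\|_{H^{l_{\mathfrak{K}}}(\hat{\mathfrak{K}})}$) collapses the Jacobian factors and produces the factor $h_{\mathfrak{K}}^{l_{\mathfrak{K}}-m}$; truncating the $h$-exponent at $p_{\mathfrak{K}}+1-m$ (when $l_{\mathfrak{K}}>p_{\mathfrak{K}}+1$) yields the stated $h_{\mathfrak{K}}^{s_{\mathfrak{K}}-m}$.

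The main obstacle is the pure $p$-estimate on the reference simplex; the parallelepiped case is a routine tensor-product argument from the one-dimensional orthogonal projection, but the simplicial case genuinely requires the Babu\v{s}ka--Suri construction, namely the decomposition of $\hat{v}$ into contributions associated with faces and interior, combined with polynomial extension operators that are stable uniformly in $p_{\mathfrak{K}}$. Since this is a classical result already proved in the cited references, I would not reconstruct it, but cite it to close the argument and then check that the post-composition with $F_{\mathfrak{K}}^{-1}$ preserves the polynomial space $\mathcal{R}_{p_{\mathfrak{K}}}$, which it does precisely because $F_{\mathfrak{K}}$ is affine (hence the distinction between $\mathbb{P}_{p_{\mathfrak{K}}}$ and $\mathbb{Q}_{p_{\mathfrak{K}}}$ is respected in both cases).
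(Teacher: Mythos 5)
The paper does not actually prove this lemma: it is recalled verbatim from Babu\v{s}ka--Suri and Schwab, so your decision to cite those references for the pure $p$-estimate on the reference element and only supply the affine pull-back and scaling is exactly in the spirit of how the result enters the paper. Your setup (affine map preserving $\mathbb{P}_{p_{\mathfrak{K}}}$ and $\mathbb{Q}_{p_{\mathfrak{K}}}$, the stated seminorm scaling relations) is correct.

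There is, however, one step in your scaling paragraph that does not work as written. You take the reference estimate with the \emph{full} norm $\|\hat{v}\|_{H^{l_{\mathfrak{K}}}(\hat{\mathfrak{K}})}$ on the right and claim that scaling it back ``produces the factor $h_{\mathfrak{K}}^{l_{\mathfrak{K}}-m}$'', which you then truncate to $h_{\mathfrak{K}}^{s_{\mathfrak{K}}-m}$. But the full norm scales as
\begin{equation*}
\|\hat{v}\|_{H^{l_{\mathfrak{K}}}(\hat{\mathfrak{K}})}^2
\leq C\,|\det DF_{\mathfrak{K}}|^{-1}\sum_{k=0}^{l_{\mathfrak{K}}} h_{\mathfrak{K}}^{2k}\,|v|_{H^{k}(\mathfrak{K})}^2 ,
\end{equation*}
which for $h_{\mathfrak{K}}\leq 1$ is dominated by the $k=0$ term; combined with the $h_{\mathfrak{K}}^{-m}$ from scaling the left-hand side this yields only $h_{\mathfrak{K}}^{-m}$ overall, not $h_{\mathfrak{K}}^{s_{\mathfrak{K}}-m}$. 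The positive power of $h_{\mathfrak{K}}$ is not obtained by ``truncating an exponent''; it comes from the polynomial-reproduction property of $\hat{\pi}_{p_{\mathfrak{K}}}$. One writes $\hat{v}-\hat{\pi}_{p_{\mathfrak{K}}}\hat{v}=(\hat{v}-\hat{w})-\hat{\pi}_{p_{\mathfrak{K}}}(\hat{v}-\hat{w})$ for an arbitrary $\hat{w}\in\mathbb{P}_{s_{\mathfrak{K}}-1}(\hat{\mathfrak{K}})$, takes the infimum over $\hat{w}$, and invokes the Deny--Lions/Bramble--Hilbert lemma so that only the seminorms $|\hat{v}|_{H^{j}(\hat{\mathfrak{K}})}$ with $s_{\mathfrak{K}}\leq j\leq l_{\mathfrak{K}}$ survive on the right; each of these scales as $h_{\mathfrak{K}}^{\,j-d/2}|v|_{H^{j}(\mathfrak{K})}\leq h_{\mathfrak{K}}^{\,s_{\mathfrak{K}}-d/2}\|v\|_{H^{l_{\mathfrak{K}}}(\mathfrak{K})}$, giving the claimed $h_{\mathfrak{K}}^{s_{\mathfrak{K}}-m}p_{\mathfrak{K}}^{-(l_{\mathfrak{K}}-m)}$. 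This device is contained in the cited proofs, so the citation does close the argument, but your own scaling step should be amended to route through the seminorm rather than the full reference norm.
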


Equipped with Lemma~\ref{Lem:approximation}, we introduce the projection operator 
$\Pi_{\angledomain}$  by
$$
\Pi_{\angledomain} v|_{\angleelement} = (\hat{\Pi}_{q_{\angleelement}} v|_{\angleelement} \circ \phi_{\angledomain}) \circ \phi_{\angledomain}^{-1}
$$
for all $\angleelement\in\anglemesh$.
Hence, employing Lemmas~\ref{Lem:demlow} \& \ref{Lem:approximation}, together with the definition of $\Pi_{\angledomain}$ we deduce the following result.

\begin{lemma} \label{Lem:approximation_angle}
	Let $\angleelement \in\anglemesh$, then given $v|_{\angleelement}\in H^{l_{\angleelement}}(\angleelement )$, for some $l_{\angleelement}\ge 0$, the
following bound holds
\begin{equation*}
\norm{v - \Pi_{\angledomain} v}_{L_2(\angleelement )}
\le C \frac{h_{\angleelement}^{s_{\angleelement}}}{q_{\angleelement}^{l_{\angleelement}}}\norm{v}_{H^{l_{\angleelement}}(\angleelement )},\quad l_{\angleelement }\ge 0,
\end{equation*}
where $s_{\angleelement}=\min\{q_{\angleelement}+1, l_{\angleelement}\}$ and $C$ is a positive constant, that depends on the shape-regularity of $\angleelement$, but is
independent of $v$, $h_{\angleelement}$, and $q_{\angleelement}$.
\end{lemma}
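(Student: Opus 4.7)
The plan is to exploit the definition of $\Pi_{\angledomain}$, which is a pullback–project–pushforward construction, to reduce the estimate on the curved element $\angleelement$ to the standard $hp$--approximation estimate on the flat reference element $\approxangleelement$, passing norms back and forth using Lemma~\ref{Lem:demlow}.

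First, I would write $\tilde{v} = v \circ \phi_{\angledomain} \in H^{l_{\angleelement}}(\approxangleelement)$ and observe that, by the definition of $\Pi_{\angledomain}$, the pullback of the error satisfies $(v - \Pi_{\angledomain} v)\circ \phi_{\angledomain} = \tilde{v} - \hat{\Pi}_{q_{\angleelement}} \tilde{v}$ on $\approxangleelement$. Applying the $L_2$ equivalence from Lemma~\ref{Lem:demlow} therefore gives
\begin{align*}
\norm{v - \Pi_{\angledomain} v}_{L_2(\angleelement)}
\le C \norm{\tilde{v} - \hat{\Pi}_{q_{\angleelement}} \tilde{v}}_{L_2(\approxangleelement)}.
\end{align*}

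Next, since $\approxangleelement$ is an affine simplex or parallelepiped (or, via $\phi_{\angleelement}$, the image of the reference element), I would apply the standard $hp$--version approximation estimate of Lemma~\ref{Lem:approximation} with $m=0$ to obtain
\begin{align*}
\norm{\tilde{v} - \hat{\Pi}_{q_{\angleelement}} \tilde{v}}_{L_2(\approxangleelement)}
\le C \frac{h_{\approxangleelement}^{s_{\angleelement}}}{q_{\angleelement}^{l_{\angleelement}}} \norm{\tilde{v}}_{H^{l_{\angleelement}}(\approxangleelement)}.
\end{align*}
To convert the right-hand side back to the curved element, I would use Lemma~\ref{Lem:demlow} once more, summing the $L_2$ bound with the seminorm bounds $|\tilde{v}|_{H^j(\approxangleelement)} \le C \|v\|_{H^j(\angleelement)}$ for $j = 1, \dots, l_{\angleelement}$, to control $\norm{\tilde{v}}_{H^{l_{\angleelement}}(\approxangleelement)}$ by $\norm{v}_{H^{l_{\angleelement}}(\angleelement)}$. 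Finally, the assumption that the first fundamental form of $\phi_{\angledomain}$ has determinant bounded above and below independently of the meshsize implies $h_{\approxangleelement} \sim h_{\angleelement}$, so $h_{\approxangleelement}^{s_{\angleelement}}$ may be replaced by $h_{\angleelement}^{s_{\angleelement}}$ up to the constant.

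There is no real obstacle here; the content of the proof is essentially the functoriality of Lemma~\ref{Lem:demlow}, which was designed precisely to make such transfers transparent. The only point requiring mild care is the conversion from the seminorm bound in Lemma~\ref{Lem:demlow} to a full Sobolev norm bound on $\tilde{v}$, which is handled by combining the $L_2$ equivalence and the individual seminorm estimates, and the observation that $h_{\approxangleelement}$ and $h_{\angleelement}$ are equivalent under our scaling assumption on $\phi_{\angledomain}$. The shape regularity dependence of the constant is inherited from Lemma~\ref{Lem:approximation} applied on $\approxangleelement$.
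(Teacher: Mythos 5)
Your argument is exactly the one the paper intends: it deduces the lemma by combining the pullback definition of $\Pi_{\angledomain}$, the norm equivalences of Lemma~\ref{Lem:demlow}, and the $hp$--approximation bound of Lemma~\ref{Lem:approximation} on the flat element, together with the scaling assumption giving $h_{\approxangleelement} \sim h_{\angleelement}$. The proposal is correct and fills in the details the paper leaves implicit.
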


For approximation with respect to energy, we simply define the projection operator $\Pi_{\energydomain}$ by $\Pi_{\energydomain}v |_{\energyelement} = \hat{\Pi}_{r_{\energyelement}} v|_{\energyelement}$, for $g=1,\ldots,N_{\energydomain}$. Collecting these three projection operators, we define $\Pi:L_2(\domain)\rightarrow \discretespace$ by $\Pi = \Pi_{\Omega}\Pi_{\angledomain}\Pi_{\energydomain}$. With this notation we state the following approximation result for the projection operator $\Pi$.

\begin{lemma}\label{lem:projectionApproximation}
	Let $\kappa\in\mesh$ such that $\kappa=\element\times\angleelement\times\energyelement$, $\element\in\spacemesh$, $\angleelement\in\anglemesh$, $\energyelement\in\energymesh$, then given $v|_{\kappa}\in H^{l_\kappa}(\kappa)$, $l_\kappa \geq 0$, the following bound holds
	\begin{align} \label{approx:l2_norm}
		\norm{v-\Pi v}^2_{L_2(\kappa)} \leq C \left( 
		\frac{h_{\element}^{2s_{\element}}}{p_{\element}^{2l_{\kappa}}}
		+\frac{h_{\angleelement}^{2s_{\angleelement}}}{q_{\angleelement}^{2l_{\kappa}}}
		+\frac{h_{\energyelement}^{2s_{\energyelement}}}{r_{\energyelement}^{2l_{\kappa}}}
		\right) \norm{v}_{H^{l_\kappa}(\kappa)}^2.
	\end{align}
	Furthermore, assuming $v|_{\kappa} \in H^{l_\kappa}(\kappa)\cup H^1(\element;H^{l_\kappa}(\angleelement\times \energyelement))$, $l_\kappa\geq 1$, we have that
	\begin{align}
		\norm{\nabla_\x (v-\Pi v)}^2_{L_2(\kappa)} \leq& ~C 
		\left( 
		\frac{h_{\element}^{2s_{\element}-2}}{p_{\element}^{2l_{\kappa}-2}}\norm{v}_{H^{l_\kappa}(\kappa)}^2
		\right. \nonumber \\
		& \left.
		+\left(
		 \frac{h_{\angleelement}^{2s_{\angleelement}}}{q_{\angleelement}^{2l_{\kappa}}}
		+\frac{h_{\energyelement}^{2s_{\energyelement}}}{r_{\energyelement}^{2l_{\kappa}}}
		\right)
		\norm{\nabla_\x v}_{L_2(\element;H^{l_\kappa}(\angleelement\times\energyelement ) )}^2
		\right), \label{approx:h1_space_norm}
\end{align}
and
\begin{align}
		&\int_{\energyelement}\int_{\angleelement} \norm{v-\Pi v}^2_{L_2(\partial \element)} \d \dir \d \energy \nonumber \\
		& ~~~~~~ \leq ~C
		\left(
		\frac{1}{h_{\element}^\perp}\left( 
		\frac{h_{\element}^{2s_{\element}}}{p_{\element}^{2l_{\kappa}-2}}
		+\frac{h_{\angleelement}^{2s_{\angleelement}}}{q_{\angleelement}^{2l_{\kappa}}}
		+\frac{h_{\energyelement}^{2s_{\energyelement}}}{r_{\energyelement}^{2l_{\kappa}}}
		\right) \norm{v}_{H^{l_\kappa}(\kappa)}^2
		\right. \nonumber\\
		& ~~~~~~~~~~
		\left.
		+\frac{h_{\element}^2}{h_{\element}^\perp} \left(
		 \frac{h_{\angleelement}^{2s_{\angleelement}}}{q_{\angleelement}^{2l_{\kappa}}}
		+\frac{h_{\energyelement}^{2s_{\energyelement}}}{r_{\energyelement}^{2l_{\kappa}}}
		\right)
		\norm{\nabla_\x v}_{L_2(\element;H^{l_\kappa}(\angleelement\times\energyelement ) )}^2
		\right). \label{approx:face_terms}
	\end{align}
	Here, $s_{\element} = \min(p_{\element}+1,l_\kappa)$, $s_{\angleelement} = \min(q_{\angleelement}+1,l_\kappa)$, $s_{\energyelement} = \min(r_{\energyelement}+1,l_\kappa)$, and $C$ is a positive constant that depends on the shape regularity of the element $\kappa$, but is independent of the mesh parameters.
\end{lemma}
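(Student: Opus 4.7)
The plan is to prove all three bounds via a telescoping decomposition of $v - \Pi v$ that exploits the fact that $\Pi_\Omega$, $\Pi_\angledomain$ and $\Pi_\energydomain$ act on disjoint variables, so they pairwise commute, and moreover $\Pi_\angledomain$ and $\Pi_\energydomain$ commute with $\nabla_\x$. Throughout I would also use the uniform $L_2$-stability of each projection, which follows from the construction of each operator as (essentially) an $L_2$-best approximation combined, in the spatial case, with the bounded extension operator of Theorem~\ref{thm-extension}.

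For bound~\eqref{approx:l2_norm} I would use the algebraic identity
\[
v - \Pi v = (v - \Pi_\Omega v) + \Pi_\Omega (v - \Pi_\angledomain v) + \Pi_\Omega \Pi_\angledomain (v - \Pi_\energydomain v),
\]
take $L_2(\kappa)$ norms, and control each term separately. The first term follows from Lemma~\ref{Lem:spatial_approximation} with $m=0$ applied pointwise in $(\dir,\energy)$, followed by integration over $\angleelement\times\energyelement$ and the observation that $\int_{\angleelement}\int_{\energyelement}\|v(\cdot,\dir,\energy)\|_{H^{l_\kappa}(\element)}^2 \,\mathrm{d}\dir\,\mathrm{d}\energy \le \|v\|_{H^{l_\kappa}(\kappa)}^2$. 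The second and third terms are treated by the same tensor-product argument, absorbing the outer projections with their $L_2$-stability and applying Lemma~\ref{Lem:approximation_angle} and Lemma~\ref{Lem:approximation} to the inner difference in the angular and energy variables, respectively.

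Bound~\eqref{approx:h1_space_norm} requires more care because $\Pi_\Omega$ does not commute with $\nabla_\x$. Exploiting that $\nabla_\x$ does commute with $\Pi_\angledomain$ and $\Pi_\energydomain$, I would rewrite
\[
\nabla_\x(v - \Pi v) = (I - \Pi_\angledomain \Pi_\energydomain)\,\nabla_\x v + \Pi_\angledomain \Pi_\energydomain\,\nabla_\x(v - \Pi_\Omega v).
\]
The second summand is bounded using $L_2$-stability of $\Pi_\angledomain \Pi_\energydomain$ together with Lemma~\ref{Lem:spatial_approximation} at $m=1$, producing the $h_\element^{2s_\element - 2}/p_\element^{2l_\kappa - 2}\,\|v\|_{H^{l_\kappa}(\kappa)}^2$ contribution. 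The first summand is further telescoped via $(I - \Pi_\angledomain \Pi_\energydomain) = (I - \Pi_\angledomain) + \Pi_\angledomain(I - \Pi_\energydomain)$, and each piece estimated using Lemmas~\ref{Lem:approximation_angle} and~\ref{Lem:approximation} applied to $\nabla_\x v$; this is precisely what introduces $\|\nabla_\x v\|_{L_2(\element;H^{l_\kappa}(\angleelement\times\energyelement))}^2$ on the right-hand side and motivates the hybrid regularity assumption on $v$.

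The face estimate~\eqref{approx:face_terms} follows by applying the multiplicative trace inequality of Lemma~\ref{Lem:trace_inequality} to $v - \Pi v$ on each $F \subset \partial\element$, and using Young's inequality on the cross term to obtain
\[
\|v-\Pi v\|_{L_2(\partial\element)}^2 \le \tfrac{C}{h_\element^\perp}\bigl(\|v-\Pi v\|_{L_2(\element)}^2 + h_\element^2\|\nabla_\x(v-\Pi v)\|_{L_2(\element)}^2\bigr).
\]
Summing over the uniformly bounded (by Assumption~\ref{Assumption:bounded_number_of_faces}) number of faces of $\element$, integrating over $\angleelement \times \energyelement$, and inserting the bounds already obtained in~\eqref{approx:l2_norm} and~\eqref{approx:h1_space_norm} then produces the stated estimate, with the factor $h_\element^2$ combining with the $h_\element^{2s_\element-2}/p_\element^{2l_\kappa-2}$ spatial contribution to give the $h_\element^{2s_\element}/p_\element^{2l_\kappa-2}$ term in the first bracket. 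I expect the principal obstacle to be bookkeeping rather than any single hard analytic step: one must keep the three distinct mesh sizes, three polynomial degrees and two different right-hand side norms aligned with the correct tensor-product Sobolev indices at every stage, and verify that the $L_2$-stability constants of the spatial projection are indeed independent of $h_\element$ and $p_\element$ (which is where the uniform-in-measure version of Theorem~\ref{thm-extension} is essential).
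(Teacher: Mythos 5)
Your proof is correct and follows essentially the same route as the paper: a telescoping decomposition of $v - \Pi v$ into single-variable projection errors controlled by Lemmas~\ref{Lem:spatial_approximation}, \ref{Lem:approximation} and \ref{Lem:approximation_angle} together with $L_2$-stability of the remaining projections, followed by the multiplicative trace inequality of Lemma~\ref{Lem:trace_inequality} for the face bound. The only (immaterial) difference is the ordering: the paper peels off the energy and angular errors first, so that only those trivially $L_2$-stable projections appear as outer factors, whereas your ordering additionally requires the $L_2$-stability of $\Pi_{\spacedomain}$ --- which, as you correctly note, follows from \eqref{approxH_k_2} with $m = l_{\element} = 0$ thanks to the uniform-in-measure version of Theorem~\ref{thm-extension}.
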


\begin{proof}
	We start by first writing the projection error in the form
	\begin{align*}
		v-\Pi v 
		= v -\Pi_{\energydomain} v + \Pi_{\energydomain} (v - \Pi_{\angledomain} v) 
		  + \Pi_{\energydomain} \Pi_{\angledomain} (v - \Pi_{\spacedomain} v).
	\end{align*}
	Then \eqref{approx:l2_norm} follows immediately upon application of the triangle inequality, employing the $L_2$-stability of $\Pi_{\energydomain}$ and $\Pi_{\angledomain}$, and the approximation results stated in Lemma~\ref{Lem:spatial_approximation}, cf. \eqref{approxH_k_2}, Lemma~\ref{Lem:approximation} and Lemma~\ref{Lem:approximation_angle}. The proof of \eqref{approx:h1_space_norm} follows in an analogous fashion. To derive \eqref{approx:face_terms}, we first employ the trace inequality stated in Lemma~\ref{Lem:trace_inequality}, together with \eqref{approx:l2_norm} and \eqref{approx:h1_space_norm}.
\end{proof}

\section{Stability and convergence of the discrete scheme}\label{sec:stability}

In this section we study the stability and convergence of the DGFEM~\eqref{eq:dgScheme}. 
To this end, we introduce the DGFEM-\emph{energy norm}
\begin{align}\label{eq:dGnorm}
	\triplenorm{v}^2 
	=& 
	\|\sqrt{\poscond} \, v\|_{L_2(\domain )}^2 \nonumber\\
	&+ 
	\frac{1}{2} 
	\int_{\energydomain}
	\int_{\angledomain} 
	\sum_{\element\in\spacemesh}
	\Big( \| v^+ - v^-\|_{\inflowelement\backslash\partial\Omega}^2
	+
	\| v^+\|_{\partial\element \cap \partial\Omega}^2
	\Big)
	\d \dir
	\d \energy,
\end{align}
and \emph{streamline norm}
\begin{align*}
	\sdnorm{v}^2
	=
	\triplenorm{v}^2
	+
	\int_{\energydomain}
	\int_{\angledomain}
	\sum_{\element \in \spacemesh}
	\tau_{\element}
	\norm{\dir \cdot \nabla_\x v}_{L_2(\element)}^2
	\d \dir
	\d \energy.
\end{align*}
Here, $\|\cdot \|_{\omega}$, $\omega \subset \partial \element$, denotes the (semi)norm associated with the (semi)inner product $(v,w)_\omega = \int_\omega |\dir \cdot \normal_{\element} |vw \d s$. Furthermore, for $\element\in\spacemesh$, we define
$$
\tau_{\element} = \frac{h_{\element}^\bot}{p_{\element}^2}.\
$$

Firstly, we state the following coercivity bound.
\begin{theorem}[Coercivity]\label{thm:coercivity}
	The DGFEM \eqref{eq:dgScheme} is coercive with respect to the DGFEM-energy norm $\triplenorm{\cdot}$ in the sense that the following bound holds:
	\begin{align*}
		b(v, v) \geq \triplenorm{v}^2
	\end{align*}
	for all $v \in \discretespace$.
\end{theorem}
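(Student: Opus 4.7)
The plan is to show $b(v,v) \geq \triplenorm{v}^2$ by separately manipulating the transport bilinear form $a(v,v)$ (which will produce all of $\triplenorm{v}^2$ except for the weight on the volume term needing to be corrected) and bounding the scattering bilinear form $s(v,v)$ from above (which will convert the $\totalscattering$ weight in the volume term into $\poscond$ via condition \eqref{condition_on_alpha}).

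First I would handle $a(v,v)$. On each spatial element $\element$, use the identity $(\dir \cdot \nabla_\x v)v = \tfrac12 \dir \cdot \nabla_\x(v^2)$ and apply the divergence theorem to obtain
\begin{align*}
\sum_{\element \in \spacemesh} \int_\element (\dir\cdot\nabla_\x v)\,v\,\d\x
= \tfrac12 \sum_{\element \in \spacemesh} \int_{\partial \element} (\dir\cdot\normal_{\element})(v^+)^2\,\d s.
\end{align*}
Splitting $\partial\element$ into inflow/outflow and interior/boundary pieces, and recalling that on an interior face shared with a neighbour the outflow trace coincides with the neighbour's $v^-$, one matches this boundary contribution against the two face terms defining $a_{\dir}^{\energy}$. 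The standard DG algebraic identity
$\tfrac12 (v^-)^2 - \tfrac12 (v^+)^2 + (v^+ - v^-)v^+ = \tfrac12 \ujump{v}^2$
then collapses the interior faces into a $|\dir\cdot\normal_\element|\ujump{v}^2/2$ jump, while on $\partial\Omega$ the outflow half-face and the full inflow penalty combine to give $\tfrac12|\dir\cdot\normal_\element|(v^+)^2$. Integrating over $\angledomain$ and $\energydomain$ yields
\begin{align*}
a(v,v) = \int_\energydomain\!\int_\angledomain\!\int_\spacedomain (\absorption+\totalscattering)v^2\,\d\x\,\d\dir\,\d\energy + \tfrac12 \int_\energydomain\!\int_\angledomain \sum_{\element} \big(\|v^+ - v^-\|_{\inflowelement\setminus\partial\Omega}^2 + \|v^+\|_{\partial\element\cap\partial\Omega}^2\big)\d\dir\,\d\energy.
\end{align*}

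Next I would bound $s(v,v)$ from above. Applying Young's inequality $2ab \leq a^2+b^2$ to the integrand of the scattering operator gives
\begin{align*}
\scatterkernel(\x,\otherdir\to\dir,\otherenergy\to\energy)\,v(\x,\otherdir,\otherenergy)\,v(\x,\dir,\energy) \leq \tfrac12 \scatterkernel(\ldots) v(\x,\otherdir,\otherenergy)^2 + \tfrac12 \scatterkernel(\ldots) v(\x,\dir,\energy)^2.
\end{align*}
Integrating and applying Fubini to the first piece, the change of variable relabelling $(\otherdir,\otherenergy)\leftrightarrow(\dir,\energy)$ converts the inner integral $\int \scatterkernel(\x,\otherdir\to\dir,\otherenergy\to\energy)\,\d\dir\,\d\energy$ into exactly $\totalscattering(\x,\otherdir,\otherenergy)$ by definition. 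The second piece produces precisely $\gamma(\x,\dir,\energy)v^2$. Hence
\begin{align*}
s(v,v) \leq \tfrac12 \int_\energydomain\!\int_\angledomain\!\int_\spacedomain (\totalscattering + \gamma)\, v^2\,\d\x\,\d\dir\,\d\energy.
\end{align*}

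Finally, subtracting the two bounds gives
\begin{align*}
b(v,v) \geq \int_\energydomain\!\int_\angledomain\!\int_\spacedomain \big(\absorption + \tfrac12\totalscattering - \tfrac12\gamma\big) v^2 \,\d\x\,\d\dir\,\d\energy + (\text{face terms}) = \triplenorm{v}^2,
\end{align*}
where the last equality is the definition of $\poscond$ together with \eqref{condition_on_alpha} and the definition of $\triplenorm{\cdot}$. The only delicate step is the Fubini swap for the scattering estimate, which is clean here because no differential operator is involved; the rest is bookkeeping on the DG face terms.
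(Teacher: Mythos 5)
Your proposal is correct and follows essentially the same route as the paper: the integration-by-parts identity for $a_{\dir}^{\energy}(v,v)$ (which the paper simply cites from the literature and you derive explicitly) combined with the symmetrised bound $s(v,v)\leq\tfrac12\|\totalscattering^{1/2}v\|_{L_2(\domain)}^2+\tfrac12\|\gamma^{1/2}v\|_{L_2(\domain)}^2$, which the paper obtains by a weighted Cauchy--Schwarz plus Young and you obtain by pointwise Young plus Fubini --- an equivalent computation. Both then conclude via the definition of $\poscond$ in \eqref{condition_on_alpha}.
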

\begin{proof}
	Integrating by parts and rearranging the face terms, the transport bilinear form satisfies
	\begin{align*}
		a_{\dir}^{\energy}(v, v) 
		= 
		\norm{(\absorption + \totalscattering)^{1/2} v}_{L_2(\spacedomain)}^2 
		+ 
		\frac{1}{2} \sum_{\element\in\spacemesh}
	\Big( \| v^+ - v^-\|_{\inflowelement\backslash\partial\Omega}^2
	+
	\| v^+\|_{\partial\element \cap \partial\Omega}^2
	\Big),
	\end{align*}
	as shown in \cite{hss_2002}.
	Recalling that $\totalscattering(\x, \dir,\energy) = \int_{\energydomain} \int_{\angledomain} \scatterkernel(\x, \dir \cdot \otherdir, \energy \to \otherenergy) \d \otherdir \d \otherenergy$ and $\gamma(\x, \dir,\energy) = \int_{\energydomain} \int_{\angledomain} \scatterkernel(\x, \dir \cdot \otherdir, \otherenergy \to \energy) \d \otherdir \d \otherenergy$, employing the Cauchy-Schwarz inequality implies that the scattering term may be bounded by
	\begin{align*}
		s(v, v)
		&=
		\int_{\energydomain} 
		\int_{\angledomain}  
		\int_{\energydomain} 
		\int_{\angledomain} 
		\int_{\spacedomain}
		\scatterkernel(\x, \dir \cdot \otherdir, \otherenergy \to \energy) v(\x, \otherdir, \otherenergy) v(\x, \dir, \energy) 
		\d \x
		\d \otherdir 
		\d \otherenergy
		\d \dir
		\d \energy
		\\&
		\leq
		\|\totalscattering^{\nicefrac{1}{2}} v \|_{L_2(\domain)} \|\gamma^{\nicefrac{1}{2}} v \|_{L_2(\domain)} 
		\leq \frac{1}{2} \|\totalscattering^{\nicefrac{1}{2}} v \|_{L_2(\domain)}^2 
		+ \frac{1}{2} \|\gamma^{\nicefrac{1}{2}} v \|_{L_2(\domain)}^2.
	\end{align*}
	The result then follows by combining these bounds with the definition of $\poscond$ in \eqref{condition_on_alpha}.
\end{proof}

We now derive an inf-sup stability result in the streamline norm $\sdnorm{\cdot}$.

\begin{theorem}[Inf-sup stability]\label{thm:infsup}
Given that Assumptions~\ref{Assumption:mesh_regularity}, \ref{Assumption:bounded_number_of_faces}, and \ref{Assumption:sub-triangulation} hold, then the DGFEM \eqref{eq:dgScheme} is inf-sup stable in the streamline norm, i.e., there exists a constant $\Lambda > 0$, independent of discretisation parameters, such that
	\begin{align*}
		\inf_{v \in \discretespace \setminus \{0\}} \sup_{w \in \discretespace \setminus \{0\}} \frac{b(v, w)}{\sdnorm{v} \sdnorm{w}} \geq \Lambda.
	\end{align*}
\end{theorem}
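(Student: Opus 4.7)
The standard recipe for such a bound is to combine the coercivity result of Theorem~\ref{thm:coercivity} with a streamline-diffusion style test. I would therefore seek a $w \in \discretespace$ of the form $w = v + \delta z$ where $z$ is designed to recover the streamline derivative term, and then show that for some fixed $\delta > 0$ one has $b(v, w) \gtrsim \sdnorm{v}^2$ together with $\sdnorm{w} \lesssim \sdnorm{v}$, from which the inf-sup constant $\Lambda$ follows.

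The natural candidate $z|_\kappa = \tau_\element (\dir\cdot\nabla_\x v)|_\kappa$ is polynomial of degree $p_\element - 1$ in $\x$ (so spatially it sits in $\spacespace|_\element$), but multiplication by $\dir$ raises the angular polynomial degree by one. I would therefore set $z|_\kappa = \tau_\element \Pi^{q_\angleelement}_{\angleelement}(\dir\cdot\nabla_\x v)|_\kappa$, where $\Pi^{q_\angleelement}_{\angleelement}$ is the $L_2$--projection in $\dir$ onto $\anglespace|_{\angleelement}$, so that $z \in \discretespace$. A super-approximation argument (since $\dir\cdot\nabla_\x v$ is a polynomial of degree $q_\angleelement + 1$ in $\dir$ on $\angleelement$) yields $\|(I - \Pi^{q_\angleelement}_{\angleelement})(\dir\cdot\nabla_\x v)\|_{L_2(\angleelement)} \lesssim q_\angleelement^{-1} \|\dir\cdot\nabla_\x v\|_{L_2(\angleelement)}$, which will be enough to keep a fixed positive fraction of the streamline contribution after projection.

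Coercivity gives $b(v, w) \geq \triplenorm{v}^2 + \delta\bigl(a(v, z) - s(v, z)\bigr)$. The volume streamline piece of $a(v, z)$ is $\sum_\kappa \tau_\element \|\Pi^{q_\angleelement}_{\angleelement}(\dir\cdot\nabla_\x v)\|_{L_2(\kappa)}^2$, which together with the super-approximation bound yields $c \sum_\kappa \tau_\element \|\dir\cdot\nabla_\x v\|_{L_2(\kappa)}^2$. The reaction cross-term is controlled by Young's inequality and the boundedness of $\absorption+\totalscattering$ together with $\poscond \geq \poscondmin$, producing $\epsilon \sum_\kappa \tau_\element \|\dir\cdot\nabla_\x v\|_{L_2(\kappa)}^2 + C_\epsilon \|\sqrt{\poscond}\, v\|_{L_2(\domain)}^2$. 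The face contributions are treated using the inverse trace inequality of Lemma~\ref{Lem:Inverse_face_to_ele} applied to the polynomial $z$: since $\tau_\element p_\element^2 / h_\element^\bot = 1$ by design of $\tau_\element$, this gives the key identity $\|z\|_{L_2(\partial\element)}^2 \lesssim \tau_\element \|\dir\cdot\nabla_\x v\|_{L_2(\element)}^2$, and so Cauchy--Schwarz together with Young's inequality splits each jump cross-term into a small multiple of the streamline quantity (absorbed on the left) and a constant multiple of $\|v^+-v^-\|^2_{\inflowelement\setminus\partial\Omega}+\|v^+\|^2_{\inflowelement\cap\partial\Omega}$ (absorbed into $\triplenorm{v}^2$). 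For the scattering term $s(v, z)$, Cauchy--Schwarz with $L_2$-boundedness of $\scattering$ gives $|s(v,z)| \lesssim \|v\|_{L_2(\domain)} \|z\|_{L_2(\domain)}$, and the inverse estimate of Lemma~\ref{Lem:Inverse_H1_L2} combined with $L_2$-stability of $\Pi^{q_\angleelement}_\angleelement$ converts $\|z\|_{L_2(\kappa)}$ into a controlled fraction of the streamline contribution plus $C \|\sqrt{\poscond}\,v\|_{L_2(\domain)}^2$.

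Choosing $\epsilon$ and then $\delta$ small enough in terms of the fixed constants above gives $b(v, w) \geq c \sdnorm{v}^2$. A parallel application of the inverse inequalities of Lemmas~\ref{Lem:Inverse_face_to_ele} and \ref{Lem:Inverse_H1_L2} bounds $\sdnorm{z}$ by the streamline contribution of $v$ and hence by $\sdnorm{v}$, giving $\sdnorm{w} \leq (1 + \delta C)\sdnorm{v}$; the inf-sup constant is then $\Lambda = c/(1 + \delta C)$. The principal obstacle is the angular over-shoot — the fact that $\dir\cdot\nabla_\x v$ lies outside $\discretespace$ in the angular variable — which forces the projection/super-approximation step; the only other subtle point is ensuring the scattering cross-term is absorbed uniformly in $h$, $p$, $q$, and $r$, which relies on the $L_2$-boundedness of $\scattering$ inherited from the boundedness of the kernel $\scatterkernel$.
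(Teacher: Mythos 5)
Your argument is, in its skeleton, exactly the paper's: test with $w = v + \delta v_s$ where $v_s|_{\element} = \tau_{\element}\,\dir\cdot\nabla_\x v$, use Theorem~\ref{thm:coercivity} to get $b(v,w) \geq \triplenorm{v}^2 + \delta\bigl(a(v,v_s)-s(v,v_s)\bigr)$, keep the volume streamline term, absorb the reaction, face and scattering cross-terms using Lemmas~\ref{Lem:Inverse_face_to_ele} and~\ref{Lem:Inverse_H1_L2} together with the identity $\tau_{\element} p_{\element}^2/h_{\element}^\bot = 1$, bound $\sdnorm{v_s}\lesssim\sdnorm{v}$ by the same inverse estimates, and choose $\delta$ small. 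Your treatment of each of those pieces matches the paper's (the paper is slightly more explicit about the scattering constant, splitting the kernel as $\scatterkernel^{1/2}\cdot\scatterkernel^{1/2}$ to produce $\norm{\totalscattering}^{1/2}_{L_\infty}\norm{\gamma}^{1/2}_{L_\infty}/\poscondmin$, but that is cosmetic).

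The one place you genuinely diverge is the angular projection $\Pi^{q_{\angleelement}}_{\angleelement}$. The paper does not do this: it takes $v_s = \tau_{\element}\,\dir\cdot\nabla_\x v$ directly as the test function and treats it as a member of $\discretespace$. You are right that this is delicate — multiplication by the components of $\dir$ takes one outside $\anglespace$ (indeed, on the curved elements $\angleelement = \phi_{\angledomain}(\approxangleelement)$ the pullback of $\dir_j$ is not even polynomial, so $\dir\cdot\nabla_\x v$ is not of angular degree $q_{\angleelement}+1$ as you state, but a smooth non-polynomial perturbation thereof). Since the inf-sup constant is subsequently used together with Galerkin orthogonality in Theorem~\ref{thm:apriori}, the test function really does need to lie in $\discretespace$, so your instinct to project is sound and is a genuine strengthening of the argument rather than a detour. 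Two caveats on your fix, though. First, the super-approximation bound with factor $q_{\angleelement}^{-1}$ does not by itself guarantee that a \emph{fixed positive fraction} of $\tau_{\element}\|\dir\cdot\nabla_\x v\|^2$ survives the projection uniformly in the discretisation: for $q_{\angleelement}=0$ or $1$ the factor is not small, and one must instead exploit the smallness of $h_{\angleelement}$ (the deviation of $\dir_j$ from its elementwise mean is $O(h_{\angleelement})$) or impose a mild mesh condition; as stated, the claim $\|\Pi^{q_{\angleelement}}_{\angleelement}(\dir\cdot\nabla_\x v)\|^2 \geq c\,\|\dir\cdot\nabla_\x v\|^2$ with $c>0$ uniform needs this extra justification. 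Second, the super-approximation estimate itself has to be re-derived on the mapped spherical elements using Lemma~\ref{Lem:demlow} rather than quoted for polynomials. With those two points repaired, your proof is complete and in fact closes a gap that the paper's own proof passes over silently.
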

\begin{proof}
	The proof follows a standard form for inf-sup results, and is similar to the argument presented in \cite{cangiani2015hp} for a scalar advection problem, adapted to the Boltzmann setting.
	To this end, we construct a function $w \in \discretespace$ for each $v \in \discretespace$ such that $\sdnorm{w} \leq \Lambda_1 \sdnorm{v}$ and 
	$b(v, w) \geq \Lambda_2 \sdnorm{v}^2$.
	The result then follows with $\Lambda = \nicefrac{\Lambda_2}{\Lambda_1}$.
	
	Let $w(\x, \dir,E) = v(\x, \dir,E) + \delta v_s(\x, \dir,E)$ where $\delta > 0$ is a constant which will be determined, depending only on the problem data, and $v_s(\x, \dir,E)|_{\element} = \tau_{\element} \dir \cdot \nabla_{\x} v(\x, \dir,E)$ on each spatial element $\element \in \spacemesh$.
	To prove that there exists $C>0$ such that $\sdnorm{w} \leq C \sdnorm{v}$, we apply the triangle inequality to find
	\begin{align*}
		\sdnorm{w} \leq \sdnorm{v} + \delta \sdnorm{v_s},
	\end{align*}
	and bound each term of $\sdnorm{v_s}$ by $\sdnorm{v}$ individually.
	Observing that $\abs{\dir} = 1$, upon application of the inverse inequality stated in Lemma~\ref{Lem:Inverse_H1_L2}, recalling the definition of $\tau_{\element}$ and noting that $h_{\element}^\bot \leq h_{\element}$, we deduce that
	\begin{align*}
		\|\sqrt{\poscond}v_s\|_{L_2(\domain )}^2 
		&=
		\int_{\energydomain}
		\int_{\angledomain}
		\sum_{\element \in \spacemesh}
		\tau_{\element}^2 \norm{\sqrt{\poscond} \dir \cdot \nabla_{\x} v}_{L_2(\element)}^2 
		\d \dir
		\d \energy \\
		&\leq
		\frac{\Cinvele \norm{\poscond}_{L_{\infty}\!(\domain)}}{\poscondmin}
		\norm{\sqrt{\poscond} v}_{L_2(\domain)}^2.
	\end{align*}
	Similarly, we have
	\begin{align*}
		&\int_{\energydomain}
		\int_{\angledomain}
		\sum_{\element \in \spacemesh}
		\tau_{\element}
		\norm{\dir \cdot \nabla_{\x} v_s}_{L_2(\element)}^2 
		\d \dir
		\d \energy \\
		& \qquad \leq
		\Cinvele
		\int_{\energydomain}
		\int_{\angledomain}
		\sum_{\element \in \spacemesh}
		\tau_{\element}
		\norm{\dir \cdot \nabla_{\x} v}_{L_2(\element)}^2
		\d \dir
		\d \energy.
	\end{align*}
	We now consider the face terms arising in the definition of the streamline norm $\sdnorm{\cdot}$. Noting 
	that $\abs{\dir \cdot \normal_{\element}} \leq 1$, applying the inverse inequality stated in Lemma~\ref{Lem:Inverse_face_to_ele} gives
	\begin{align*}
	&\frac{1}{2} 
	\int_{\energydomain}
	\int_{\angledomain} 
	\sum_{\element\in\spacemesh}
	\Big( \| v_s^+ - v_s^-\|_{\inflowelement\backslash\partial\Omega}^2
	+
	\| v_s^+\|_{\partial\element \cap \partial\Omega}^2
	\Big)
	\d \dir
	\d \energy 
	\\
	& \qquad
	\leq \int_{\energydomain}
	\int_{\angledomain} 
	\sum_{\element\in\spacemesh}
	\sum_{F\subset \partial\element}
	\| v_s^+\|_{L_2(F)}^2
	\d \dir
	\d \energy
	\leq \Cinvface C_F \sdnorm{v}^2. 
	\end{align*}
	Since the terms resulting from these bounds are components of $\sdnorm{\cdot}$, it follows that 
	\begin{align*}
		\sdnorm{w} 
		\leq 
		\Lambda_1
		\sdnorm{v}
		\quad\text{ with }\quad
		\Lambda_1 
		= 
		1 + \delta 
		\Big(
		\Cinvele \Big(1+\frac{\norm{\poscond}_{L_{\infty}(\domain)}}{\poscondmin} \Big)
		+ \Cinvface C_F
		\Big)^{1/2}.
	\end{align*}
	
	We now show that $b(v, w) \geq \Lambda_2 \sdnorm{v}^2$.
	By linearity and the coercivity bound stated in Theorem~\ref{thm:coercivity}, we deduce that
	\begin{align} \label{eq:infsup_eq1}
		b(v, w) = b(v, v) + \delta b(v, v_s) 
		\geq 
		\triplenorm{v}^2 + 
		\delta 
		(a(v, v_s) - s(v, v_s)),
	\end{align}
	and expanding the second term on the right-hand side of \eqref{eq:infsup_eq1} gives
	\begin{align*}
		a(v, v_s)
		&=	
		\int_{\energydomain}
		\int_{\angledomain} 
		\sum_{\element \in \spacemesh} 
		\tau_{\element}
		\Big(
		\norm{\dir \cdot \nabla_\x v}_{L_2(\element)}^2
		+
		\int_{\element}
		(\absorption + \totalscattering) (\dir \cdot \nabla_{\x} v) v  
		\d \x \\
		& \quad -\int_{\inflowelement\backslash\partial\Omega} (\dir \cdot \normal_{\element} ) \ujump{v} \dir \cdot \nabla_{\x} v^+ \d s \\
        & \quad -\int_{\inflowelement\cap\partial\Omega} (\dir \cdot \normal_{\element}) v^+ \dir \cdot \nabla_{\x} v^+ \d s
		\Big)
		\d \dir
		\d \energy \\
		&\equiv {\rm I} + {\rm II} + {\rm III} + {\rm IV}.
	\end{align*}
	Term ${\rm I}$ is already in the required form; employing Lemma~\ref{Lem:Inverse_H1_L2}, Term ${\rm II}$ may be bounded as follows:
	\begin{align*}
	|{\rm II}| &\leq
		\norm{\absorption + \totalscattering}_{L_{\infty}(\domain)} 
		\int_{\energydomain}
		\int_{\angledomain} 
		\sum_{\element \in \spacemesh}
		\tau_{\element}
		\|\dir \cdot \nabla_\x v\|_{L_2(\element)} \|v\|_{L_2(\element)} \d \dir
		\d \energy\\
		&\leq 
		\left(\Cinvele\right)^{\nicefrac{1}{2}}\frac{\norm{\absorption + \totalscattering}_{L_{\infty}(\domain)}}{\poscondmin}
		\| \sqrt{\poscond} v \|^2_{L_2(\domain)} .
	\end{align*}
We now consider the face terms present in terms ${\rm III}$ and ${\rm IV}$; employing the inverse inequality in Lemma~\ref{Lem:Inverse_face_to_ele} together with Young's inequality, we deduce that
		\begin{align*}
		|{\rm III}+{\rm IV}| 
		&\leq 
		\int_{\energydomain}
		\int_{\angledomain} \!
		\sum_{\element \in \spacemesh}\!\!\!
		\big( C_F^2 \Cinvface \big(\| v^+ \!\!- \! v^-\|_{\inflowelement\backslash\partial\Omega}^2
		+
		\| v^+\|_{\partial\element \cap \partial\Omega}^2 \big) \\
		&  \qquad + \frac{\tau_{\element}}{4} \| \dir\cdot\nabla_{\x} v\|_{L_2(\Omega)}^2 
		\big)
		\d \dir
		\d \energy.
	\end{align*}
	
	Finally, we bound the scattering term; recalling 
	the definition of $\totalscattering$ and $\gamma$, employing
	the Cauchy-Schwarz inequality and Lemma~\ref{Lem:Inverse_H1_L2} gives
	\begin{align*}
		& s(v, v_s) =
		\sum_{\element \in \spacemesh}
		\tau_{\element} \\
		&
		\times \int_{\element} 
		\int_{\energydomain}
		\int_{\angledomain} 
		\int_{\energydomain}
		\int_{\angledomain} 
		\scatterkernel(\x, \dir \cdot \otherdir, \otherenergy \to \energy) 
		v(\x, \otherdir,\otherenergy ) \dir \cdot \nabla_\x v (\x, \dir,\energy )
		\d \otherdir 
		\d \otherenergy 
		\d \dir 
		\d \energy 
		\d \x
		\\&
		\leq
		\Big(
		\int_{\spacedomain} 
		\int_{\energydomain}
		\int_{\angledomain} 
		\totalscattering
		v^2
		\d \dir 
		\d \energy 
		\d \x
		\Big)^{1/2} 
		\Big(
		\sum_{\element \in \spacemesh}
		\tau_{\element}^2
		\int_{\element} 
		\int_{\energydomain}
		\int_{\angledomain} 
		\gamma
		(\dir \cdot \nabla_\x v)^2 
		\d \dir 
		\d \energy 
		\d \x
		\Big)^{1/2}
		\\&
		\leq
		(\Cinvele)^{\nicefrac{1}{2}}
		\frac{\norm{\totalscattering}^{\nicefrac{1}{2}}_{L_\infty(\domain)}
		\norm{\gamma}^{\nicefrac{1}{2}}_{L_\infty(\domain)}}{\poscondmin}
		\norm{
		\sqrt{\poscond} v}_{L_2(\domain)}^2.
	\end{align*}
	Combining the individual estimates above, we deduce that
	\begin{align*}
		b(v, w) 
		\geq& 
		\int_{\energydomain}	
		\int_{\angledomain}
		\sum_{\element \in \spacemesh}
		\Big(
		C_1
		 \|\sqrt{\poscond} v \|^2_{L_2(\element)}
		 + C_2
		\Big( \| v^+ - v^-\|_{\inflowelement\backslash\partial\Omega}^2
		+
		\| v^+\|_{\partial\element \cap \partial\Omega}^2 \Big) \\
		&\qquad+ \frac{3\delta}{4} \tau_{\element}
		\norm{\dir \cdot \nabla_\x v}_{L_2(\element)}^2
		\Big)
		\d \dir
		\d \energy.
	\end{align*}
	where
	$$
		C_1 =
		1 - \delta \left(\Cinvele\right)^{\nicefrac{1}{2}}\frac{\norm{\absorption + \totalscattering}_{L_{\infty}(\domain)}}{\poscondmin} 
		- \delta \left(\Cinvele\right)^{\nicefrac{1}{2}}
		\frac{\norm{\totalscattering}^{\nicefrac{1}{2}}_{L_\infty(\domain)}
		\norm{\gamma}^{\nicefrac{1}{2}}_{L_\infty(\domain)}}{\poscondmin},
	$$
	and
	$
	C_2 = 
		\frac{1}{2} - \delta C_F^2 \Cinvface
	$.
	Setting 
		$
		\Lambda_2 
		= 
		\min \Big\{
		\frac{3\delta}{4},
		C_1, C_2
		\Big\}
		$
	which is positive for
	\begin{align*}
		0
		<
		\delta 
		<
		\min \left\{
		\frac{\poscondmin}
		{
		\left(\Cinvele\right)^{\nicefrac{1}{2}} 
		\big( \norm{\absorption + \totalscattering}_{L_\infty(\domain)} 
		      + \norm{\totalscattering}^{\nicefrac{1}{2}}_{L_\infty(\domain)}
		        \norm{\gamma}^{\nicefrac{1}{2}}_{L_\infty(\domain)} 
		\big)
		}
		,
		\frac{1}{2 C_F^2 \Cinvface}
		\right\},
	\end{align*}
	we conclude that
	$
		b(v, w) \geq \Lambda_2 \sdnorm{v}^2
	$
	and the result follows.
\end{proof}

Finally, we state the main result of this paper in the following theorem.

\begin{theorem}[Convergence in the streamline norm]\label{thm:apriori}
Given the mesh $\mesh$ defined over the space-angle-energy domain $\domain$, we assume that the spatial polytopic mesh $\spacemesh$ satisfies Assumptions~\ref{Assumption:mesh_regularity}, \ref{Assumption:bounded_number_of_faces}, and \ref{Assumption:sub-triangulation}. Let $u_h \in \discretespace$ denote the DGFEM approximation satisfying~\eqref{eq:dgScheme}, let  
$u\in H^1(\domain)$ denote the solution of the problem~\eqref{eq:pde} and suppose that $u|_{\kappa} \in H^{l_\kappa}(\kappa)\cup H^1(\element;H^{l_\kappa}(\angleelement\times \energyelement))$, $l_\kappa>1$. 
Then it follows that
\begin{align*}
\sdnorm{u-u_h}^2
\leq& ~C \sum_{\kappa\in\mesh} 
\left( 
	\frac{h_{\element}^{2s_{\element}}}{p_{\element}^{2l_{\kappa}}} 
	\left( {\mathcal L}_\kappa(\absorption,\totalscattering,\gamma )
		+ \frac{1}{h_{\element}^\bot}(1+p_{\element}^2) 
		+ \frac{h_{\element}^\bot}{h^2_{\element}}
	\right) \norm{u}_{H^{l_\kappa}(\kappa)}^2
\right. \\
& \left. 
	+\left(
		\frac{h_{\angleelement}^{2s_{\angleelement}}}{q_{\angleelement}^{2l_{\kappa}}}
		+\frac{h_{\energyelement}^{2s_{\energyelement}}}{r_{\energyelement}^{2l_{\kappa}}}
	\right)
	\left(
		\left( 
		{\mathcal L}_\kappa(\absorption,\totalscattering,\gamma )
		+ \frac{1}{h_{\element}^\bot}
		\right) \norm{u}_{H^{l_\kappa}(\kappa)}^2
	\right.
\right. \\
& \left.
	\left.
		\qquad\qquad\qquad\quad\quad
		+\left(
		\frac{h_{\element}^2}{h_{\element}^\bot} + \frac{h_{\element}^\bot}{p_{\element}^2} 
		\right)
		\norm{u}_{H^1(\element; H^{l_\kappa}(\angleelement\times\energyelement ))}^2
	\right)
\right),
\end{align*}
where
$${\mathcal L}_\kappa(\absorption,\totalscattering,\gamma ) = \norm{\poscond}_{L_\infty(\kappa)} 
		+ (\norm{\absorption+\totalscattering}_{L_\infty(\kappa)}^2
		+\norm{\totalscattering}_{L_\infty(\kappa)}\norm{\gamma}_{L_\infty(\kappa)})\poscondmin^{-1},$$
$s_{\element} = \min (p_{\element}+1,l_{\kappa})$,
$s_{\angleelement} = \min (q_{\angleelement}+1,l_{\kappa})$,
$s_{\energyelement} = \min (r_{\energyelement}+1,l_{\kappa})$ and $C$ is a positive constant
which is independent of the discretization parameters.
\end{theorem}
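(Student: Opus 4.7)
The plan is to combine the projection-based error decomposition with the inf-sup stability from Theorem~\ref{thm:infsup} and the Galerkin orthogonality (consistency) of the scheme. Write $u - u_h = \eta - \xi$ with $\eta := u - \Pi u$ and $\xi := u_h - \Pi u \in \discretespace$, so that $\sdnorm{u - u_h} \leq \sdnorm{\eta} + \sdnorm{\xi}$ by the triangle inequality. Since consistency gives $b(u - u_h, v_h) = 0$ for all $v_h \in \discretespace$, we have $b(\xi, v_h) = b(\eta, v_h)$, and Theorem~\ref{thm:infsup} then yields
\begin{align*}
\sdnorm{\xi} \leq \Lambda^{-1} \sup_{w \in \discretespace \setminus \{0\}} \frac{|b(\eta, w)|}{\sdnorm{w}}.
\end{align*}
The remaining task is therefore to bound $\sdnorm{\eta}$ directly and to estimate $|b(\eta, w)|$ uniformly in $w \in \discretespace$.

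The bound on $\sdnorm{\eta}$ follows by expanding the norm \eqref{eq:dGnorm} term by term and invoking Lemma~\ref{lem:projectionApproximation}. The volume contribution $\|\sqrt{\poscond}\,\eta\|_{L_2(\domain)}^2$ produces the $\|\poscond\|_{L_\infty(\kappa)}$ portion of $\mathcal{L}_\kappa$ via \eqref{approx:l2_norm}; the face contributions use \eqref{approx:face_terms} (with $|\dir \cdot \normal_{\element}| \leq 1$), producing the $1/h_{\element}^\bot$ and $h_{\element}^2/h_{\element}^\bot$ weightings; and the streamline contribution $\tau_{\element} \|\dir \cdot \nabla_\x \eta\|_{L_2(\element)}^2$, combined with $\tau_{\element} = h_{\element}^\bot/p_{\element}^2$ and \eqref{approx:h1_space_norm}, supplies the $h_{\element}^\bot/h_{\element}^2$ and $h_{\element}^\bot/p_{\element}^2$ factors appearing in the statement.

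For the consistency functional, split $b = a - s$ and bound each term by Cauchy--Schwarz combined with the inverse/trace inequalities of Lemmas~\ref{Lem:Inverse_face_to_ele}--\ref{Lem:trace_inequality}. The reaction term $\int_\element (\absorption + \totalscattering) \eta w \, \d \x$ yields the $\|\absorption + \totalscattering\|_{L_\infty}^2/\poscondmin$ contribution to $\mathcal{L}_\kappa$ via $\|w\|_{L_2} \leq \poscondmin^{-1/2} \sdnorm{w}$. The transport volume term $\int_\element (\dir \cdot \nabla_\x \eta) w \, \d \x$ is treated with the weighted split $\tau_{\element}^{-1/2} \|\eta\|_{L_2(\element)} \cdot \tau_{\element}^{1/2} \|w\|_{L_2(\element)}$, which supplies the $p_{\element}^2/h_{\element}^\bot$ weighting on $\|\eta\|_{L_2}$; alternatively, an integration by parts transfers the derivative onto $w$ and is absorbed by the streamline component of $\sdnorm{w}$. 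The face jump terms are bounded by applying Lemma~\ref{Lem:trace_inequality} to $\eta$ and Lemma~\ref{Lem:Inverse_face_to_ele} to $w$, so that $\|w^+\|_{L_2(F)}$ is absorbable into $\sdnorm{w}$. Finally, the scattering term $s(\eta, w)$ is handled exactly as in the proof of Theorem~\ref{thm:coercivity}, producing the $\|\totalscattering\|_{L_\infty} \|\gamma\|_{L_\infty}/\poscondmin$ contribution to $\mathcal{L}_\kappa$. A last application of Lemma~\ref{lem:projectionApproximation} to the norms of $\eta$ appearing in these estimates then delivers a bound of the claimed form, which combined with the approximation estimate for $\sdnorm{\eta}$ completes the proof via the triangle inequality.

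The principal obstacle is the bookkeeping in the consistency step: each term of $b(\eta, w)$ must be paired with a weighted norm on $w$ that is absorbable by $\sdnorm{w}$, while the matching norm on $\eta$ must be bounded optimally in $(h_{\element}, h_{\element}^\bot, p_{\element})$ and the angular/energy analogues by Lemma~\ref{lem:projectionApproximation}. Since $\eta \notin \discretespace$, the inverse trace inequality of Lemma~\ref{Lem:Inverse_face_to_ele} can only be invoked on $w$, whereas the multiplicative trace inequality of Lemma~\ref{Lem:trace_inequality} is the correct tool for face values of $\eta$; the asymmetry between these two tools is precisely what produces the $(1 + p_{\element}^2)/h_{\element}^\bot$ and $h_{\element}^\bot/h_{\element}^2$ weightings in the final estimate.
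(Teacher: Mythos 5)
Your overall architecture coincides with the paper's: decompose $u-u_h$ via the projector $\Pi$, bound $\sdnorm{u-\Pi u}$ by Lemma~\ref{lem:projectionApproximation}, and control $\sdnorm{\Pi u-u_h}$ through Galerkin orthogonality and the inf-sup bound of Theorem~\ref{thm:infsup}, estimating $b(u-\Pi u,w)$ term by term. However, two of your termwise estimates, as described, would not deliver the stated bound. For the transport term, the elementwise integration by parts is not an ``alternative'' but the essential step: with the derivative still on $\eta=u-\Pi u$, Cauchy--Schwarz produces $\|\dir\cdot\nabla_\x\eta\|_{L_2(\element)}$, whose approximation bound \eqref{approx:h1_space_norm} loses a full power of $h_{\element}$ relative to \eqref{approx:l2_norm}, so the resulting estimate is suboptimal in $h$. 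Only after moving the derivative onto $w$ can one pair $\tau_{\element}^{-1/2}\|\eta\|_{L_2(\element)}$ with $\tau_{\element}^{1/2}\|\dir\cdot\nabla_\x w\|_{L_2(\element)}$ --- note, not with $\tau_{\element}^{1/2}\|w\|_{L_2(\element)}$ as you wrote, since the latter is not the streamline component of $\sdnorm{w}$ and the product of your two written factors is just $\|\eta\|_{L_2(\element)}\|w\|_{L_2(\element)}$, which carries no $p_{\element}^2/h_{\element}^\bot$ weight at all.

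The face terms are where your route genuinely breaks down. After integration by parts they take the form $\int_{\inflowelement\backslash\partial\Omega}(\dir\cdot\normal_{\element})\ujump{w}\,\eta^-\,\d s$ plus the outflow-boundary analogue, and the correct move is Cauchy--Schwarz in the $|\dir\cdot\normal_{\element}|$-weighted inner product, so that the $w$-factors $\|w^+-w^-\|_{\inflowelement\backslash\partial\Omega}$ and $\|w^+\|_{\outflowelement\cap\partial\Omega}$ are verbatim components of $\sdnorm{w}$; no inverse inequality is applied to $w$ anywhere in this part of the argument, and the $\eta$-factors are handled by \eqref{approx:face_terms}. Your proposal to bound $\|w^+\|_{L_2(F)}$ via Lemma~\ref{Lem:Inverse_face_to_ele} would replace these weighted seminorms by $(\Cinvface p_{\element}^2/h_{\element}^\bot)^{1/2}\|w\|_{L_2(\element)}$, and the extra factor $p_{\element}^2/h_{\element}^\bot$ then multiplies the face contribution of $\eta$, producing a bound strictly worse than the one claimed. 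Consequently, your closing diagnosis --- that the ``asymmetry'' between Lemmas~\ref{Lem:Inverse_face_to_ele} and~\ref{Lem:trace_inequality} is what generates the $(1+p_{\element}^2)/h_{\element}^\bot$ and $h_{\element}^\bot/h_{\element}^2$ weights --- misattributes their origin: the $(1+p_{\element}^2)/h_{\element}^\bot$ factor comes from the multiplicative trace inequality applied to $\eta$ (encoded in \eqref{approx:face_terms}) together with the $\tau_{\element}^{-1}$ weight from the volume pairing, while $h_{\element}^\bot/h_{\element}^2$ arises from the streamline term $\tau_{\element}\|\dir\cdot\nabla_\x\eta\|_{L_2(\element)}^2$ in $\sdnorm{u-\Pi u}$ itself, as you in fact state correctly earlier in the proposal.
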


\begin{proof}
The triangle inequality implies that
\begin{equation}
	\sdnorm{u - u_h} \leq \sdnorm{u - \Pi u} + \sdnorm{\Pi u - u_h}, \label{apriori:eqn1}
\end{equation}
where $\Pi$ denotes the projection operator defined in Section~\ref{Sec:Inverse_and_approx_estimates}. Exploiting
the approximation results derived in Lemma~\ref{Lem:approximation}, the first term on the right-hand side of \eqref{apriori:eqn1} can be bounded as follows:
\begin{align}
	\sdnorm{u - \Pi u}^2
	\leq & ~ 
	C \sum_{\kappa\in\mesh} 
M_{\kappa} \norm{u}_{H^{l_\kappa}(\kappa)}^2
	+T_{\kappa}
		\left(
		\frac{h_{\element}^2}{h_{\element}^\bot} + \frac{h_{\element}^\bot}{p_{\element}^2} 
		\right)
		\norm{u}_{H^1(\element; H^{l_\kappa}(\angleelement\times\energyelement ))}^2
, \label{apriori:eqn2}
\end{align}
where
$$
	T_{\kappa} =
		\frac{h_{\angleelement}^{2s_{\angleelement}}}{q_{\angleelement}^{2l_{\kappa}}}
		+\frac{h_{\energyelement}^{2s_{\energyelement}}}{r_{\energyelement}^{2l_{\kappa}}},
$$
and
$$
	M_{\kappa} = \frac{h_{\element}^{2s_{\element}}}{p_{\element}^{2l_{\kappa}}} 
	\left( \norm{\poscond}_{L_\infty(\kappa)}
		+ \frac{1}{h_{\element}^\bot}(1+p_{\element}^2) 
		+ \frac{h_{\element}^\bot}{h_{\element}^2}
	\right) + T_{\kappa} \left( 
		\norm{\poscond}_{L_\infty(\kappa)}
		+ \frac{1}{h_{\element}^\bot}
		\right).
$$
Recalling the inf-sup bound derived in Theorem~\ref{thm:infsup} and employing Galerkin orthogonality, the second term on the right-hand side of \eqref{apriori:eqn1} can be bounded by
\begin{equation}
	\sdnorm{\Pi u - u_h} \leq \frac{1}{\Lambda} \sup_{w \in \discretespace \setminus \{0\}} \frac{b(u -\Pi u, w)}{\sdnorm{w}}. \label{apriori:eqn3}
\end{equation}
We proceed by estimating the individual terms arising in $b(u - \Pi u, w)$. 
Writing $u_\Pi = u -\Pi u$ and integrating by parts elementwise gives
	\begin{align*}
		& a_{\dir}^{\energy}(u_\Pi, w) \\
		&=  
		\sum_{\element \in \spacemesh} \left(\int_{\element} 
		((\absorption + \totalscattering) u_\Pi w - u_\Pi \dir \cdot \nabla_\x w) \d \x
		\right. \\
		& \qquad\left.
		+\int_{\inflowelement\backslash\partial\Omega} (\dir \cdot \normal_{\element}) \ujump{w} u_\Pi^- \d  s
	    -\int_{\outflowelement\cap\partial\Omega} (\dir \cdot \normal_{\element}) u_\Pi^+ w^+ \d s
	    \right).
\end{align*}
The Cauchy-Schwarz inequality therefore implies that
\begin{align*}
		& a_{\dir}^{\energy}(u_\Pi, w) \\
	    &\leq
	    \sum_{\element \in \spacemesh} 
	    \bigg(
	    \frac{\norm{\absorption + \totalscattering}_{L_\infty(\element)}}{\sqrt{\poscondmin}} 
	    \norm{u_\Pi}_{L_2(\element)} \norm{\sqrt{\poscond} w}_{L_2(\element)}
	    + \norm{u_\Pi^+}_{\outflowelement\cap\partial\Omega} 
	    \norm{w^+}_{\outflowelement\cap\partial\Omega}
	     \\
	    &\qquad
	    +\norm{\tau_{\element}^{-\nicefrac{1}{2}} u_\Pi}_{L_2(\element)}
	    \norm{\tau_{\element}^{\nicefrac{1}{2}} \dir \cdot \nabla_\x w}_{L_2(\element)}
	    + \norm{u_\Pi^-}_{\inflowelement\backslash\partial\Omega} 
	    \norm{w^+-w^-}_{\inflowelement\backslash\partial\Omega}
	    \bigg)
\end{align*}
and, applying the Cauchy-Schwarz inequality once again gives
\begin{align*}
		a_{\dir}^{\energy}(u_\Pi, w)
	    &\leq 
	    \bigg( \!
	    \sum_{\element \in \spacemesh} \!\!\!
	    \bigg( \!\!
	    	\bigg(
	    	\frac{\norm{\absorption + \totalscattering}_{L_\infty(\element)}^2}{\poscondmin} 
	    	+ \frac{1}{\tau_{\element}}
	    	\bigg) \norm{u_\Pi}^2_{L_2(\element)}
	    \\
	    &
	    \qquad
	    \qquad
	    \qquad
	    	\! + \! 2 \norm{u_\Pi^-}^2_{\inflowelement\backslash\partial\Omega}
	    \! + \! 2 \norm{u_\Pi^+}^2_{\outflowelement\cap\partial\Omega}
	    \bigg) \!\!
	    \bigg)^{\frac{1}{2}} \\
	    & 
	    \qquad
	    \qquad
	    \times\bigg(
	    \sum_{\element \in \spacemesh} 
	    \Big(
	    \norm{\sqrt{\poscond} w}_{L_2(\element)}^2 
	    + \tau_{\element}\norm{\dir \cdot \nabla_\x w}^2_{L_2(\element)}
	    \\
	    &
	    \qquad
	    \qquad 
	    \qquad 
	    + \frac{1}{2}\norm{w^+-w^-}_{\inflowelement\backslash\partial\Omega}^2
		+ \frac{1}{2}\norm{w^+}_{\outflowelement\cap\partial\Omega}^2
		\Big) \!\!
	    \bigg)^{\frac{1}{2}}.
	    \end{align*}
	Hence, integrating over energy and angle, and applying the Cauchy-Schwarz inequality and Lemma~\ref{Lem:approximation}, we deduce that
	\begin{align*}
	a(u_\Pi, w) 
		\leq& ~C \left(\sum_{\kappa\in\mesh} 
\left( 
	\frac{h_{\element}^{2s_{\element}}}{p_{\element}^{2l_{\kappa}}} 
	\left( \frac{\norm{\absorption + \totalscattering}_{L_\infty(\kappa)}^2}{\poscondmin}
		+ \frac{1}{h_{\element}^\bot} (1+p_{\element}^2)
	\right) \norm{u}_{H^{l_\kappa}(\kappa)}^2
\right. \right. \\
& \left. 
	+\left(
		\frac{h_{\angleelement}^{2s_{\angleelement}}}{q_{\angleelement}^{2l_{\kappa}}}
		+\frac{h_{\energyelement}^{2s_{\energyelement}}}{r_{\energyelement}^{2l_{\kappa}}}
	\right)
	\left(
		\left( 
		\frac{\norm{\absorption + \totalscattering}_{L_\infty(\kappa)}^2}{\poscondmin}
		+ \frac{1}{h_{\element}^\bot}
		\right) \norm{u}_{H^{l_\kappa}(\kappa)}^2
	\right.
\right. \\
\qquad& \left.
	\left.
	\left.
		\qquad\qquad\qquad\qquad\qquad
		+
		\frac{h_{\element}^2}{h_{\element}^\bot} 
				\norm{u}_{H^1(\element; H^{l_\kappa}(\angleelement\times\energyelement ))}^2
	\right)
\right)
\right)^{\nicefrac{1}{2}} \sdnorm{w}.
	\end{align*}	
	Finally, we consider the scattering term; applying the Cauchy-Schwarz inequality, recalling 
	the definition of $\totalscattering$ and $\gamma$, and using Lemma~\ref{Lem:approximation} gives
	\begin{align*}
		&s(u_\Pi, w)\\
		&\leq
		C 
		\left( 
		\sum_{\kappa\in\mesh} 
		\frac{\norm{\totalscattering}_{L_\infty(\kappa)}
		\norm{\gamma}_{L_\infty(\kappa)}}{\poscondmin}
		\left(
		\frac{h_{\element}^{2s_{\element}}}{p_{\element}^{2l_{\kappa}}}
		+\frac{h_{\angleelement}^{2s_{\angleelement}}}{q_{\angleelement}^{2l_{\kappa}}}
		+\frac{h_{\energyelement}^{2s_{\energyelement}}}{r_{\energyelement}^{2l_{\kappa}}}
		\right) \norm{u}_{H^{l_\kappa}(\kappa)}^2
		\right)^{\nicefrac{1}{2}}
		\norm{\sqrt{\poscond} w}_{L_2(\domain)}.
	\end{align*}
	The result then follows by inserting the above bounds into~\eqref{apriori:eqn3} and using~\eqref{apriori:eqn2}.
\end{proof}

\begin{remark}[$p$-suboptimality of Theorem~\ref{thm:apriori}]\label{remark:convergence}
	Let $h_{\kappa} = \diam(\element)$, $\kappa\in\mesh$, and $h = \max_{\kappa\in\mesh}h_\kappa$, and suppose we have a uniform polynomial degree for all elements, so $p_{\element} = p$ for all $\element\in\spacemesh$, $q_{\angleelement} = p$ for all $\angleelement\in\anglemesh$, $r_{\energyelement} = p$ for all $\energyelement\in\energymesh$.
	Assume that we also have a uniform smoothness degree $s_{\kappa}=s$ for all $\kappa\in\mesh$, $s=\min(p+1,l)$, $l\geq 1$, and that the diameter of the spatial faces of each element $\element\in\spacemesh$ is of comparable size to the diameter of the corresponding element, i.e., so that $h_{\element}^\bot \sim h_{\element}$.
	Then, the {\em a priori} bound stated in Theorem~\ref{thm:apriori} yields
\begin{align*}
	\sdnorm{u-u_h} \sim \mathcal{O} \left(\frac{h^{s-\nicefrac{1}{2}}}{p^{l-1}}\right),
\end{align*}
as $h \rightarrow 0$ and $p\rightarrow \infty$.
This bound is optimal with respect to the meshsize $h$, but suboptimal in the polynomial degree $p$ by half an order, cf. the corresponding result derived in \cite{cangiani2015hp} for the DGFEM approximation of the linear transport problem on (spatial) polytopic meshes.
\end{remark}

\section{Efficient implementation as a multigroup discrete ordinates scheme}
\label{sec:implementation}

The numerical method \eqref{eq:dgScheme} introduced above can be implemented in the framework of a multigroup discrete ordinates scheme.
Although at first sight it appears that the method fully couples the space, angle and energy unknowns, we show that, through a judicious choice of basis functions and element quadrature schemes, it is possible to evaluate the DGFEM solution by simply computing a sequence of linear transport problems in the $d$ spatial variables. To this end, we first consider the multigroup approximation in energy before outlining the angular implementation.

\subsection{Multigroup implementation in energy} \label{sec:multigroup_implementation}

We first show how the energy dependence of the problem may be decoupled.
If we had perfect knowledge of the function
\begin{align*}
	u^+(\x, \dir, \energy) = 
	\begin{cases}
		u(\x, \dir, \energy) \text{ for } \energy > \hat{\energy},
		\\
		0 \text{ otherwise,}
	\end{cases}
\end{align*}
for some $\hat{\energy} > 0$,
then the assumption that the scattering kernel satisfies $\scatterkernel(\x, \otherdir \cdot \dir, \otherenergy \to \energy) = 0$ for $\otherenergy < \energy$, would imply that $\hat{u}(\x, \dir) \equiv u(\x, \dir, \hat{\energy})$ satisfies the monoenergetic radiation transport problem:
find $\hat{u} : \spacedomain \times \angledomain \to \Re$ such that
\begin{align}
	\dir \cdot \nabla_{\x} \hat{u}(\x, \dir) + (\absorption(\x, \dir, \hat{\energy}) + \totalscattering(\x, \dir, \hat{\energy})) \hat{u}(\x, \dir)
	 &= 
	 \scattering[u^+](\x, \dir, \hat{\energy}) \notag \\
	 & \qquad + f(\x, \dir, \hat{\energy}) \text{ in } \domain,
	 \notag
	\\
	\hat{u}(\x, \dir)
	 &= 
	 g(\x, \dir, \hat{\energy}) \text{ on } \Gamma_{\inflow}. \notag
\end{align}
This is the observation underpinning the standard multigroup discretisation: in the discrete setting, we first solve for the fluence in the highest energy group (corresponding to $g = 1$) and then subsequently for each lower energy group in turn. 
Recalling that $\energyelement=(\energy_g,\energy_{g-1})$ denotes the $g$th energy group, $1\leq g\leq N_\energydomain$, we therefore introduce the following family of energy cutoff functions: 
\begin{align*}
	u_g^+(\x, \dir, \energy) = 
	\begin{cases}
		u_h(\x, \dir, \energy) \text{ for } \energy \geq \energy_{g-1},
		\\
		0 \text{ otherwise,}
	\end{cases}
\end{align*}
which represents the component of the discrete fluence which may be considered as pre-computed `data' when solving for the fluence in group $\energyelement$, and focus on solving the problem in a single energy group $\energyelement$, $1\leq g\leq N_\energydomain$.

We expand $u_h$ in group $\energyelement$ in terms of energy basis functions as
\begin{align*}
	u_h(\x, \dir, \energy)|_{\energyelement} \equiv u_g(\x, \dir, \energy) = \sum_{j=1}^{r_{\energyelement} + 1} u_g^j(\x, \dir) \varphi_g^j(\energy),
\end{align*}
where $u_g^j \in \spaceanglespace = \spacespace \otimes \anglespace$, $j=1,2,\ldots,r_{\energyelement} + 1$, and $\{\varphi_g^j\}_{j=1}^{r_{\energyelement}+1}$ forms a basis of $\mathbb{P}_{r_{\energyelement}}(\energyelement)$  (which is only supported on $\energyelement$).
Selecting $v_h=v_g \varphi_g^i \in \discretespace$, with $v_g\in \spaceanglespace$, $i = 1,2, \dots, r_{\energyelement}+1$, the fluence in group $\energyelement$ may then be computed by solving: find $\left\{ u_g^i \right\}_{i=1}^{r_{\energyelement}+1} \in \spaceanglespace$ such that\begin{align}\label{eq:energyDiscrete}
	\sum_{j=1}^{r_{\energyelement}+1}
	\left(
	\int_{\energyelement}
	\int_{\angledomain}
	a_{\dir}^{\energy}(u_g^j, v_g)
	\varphi_g^j
	\varphi_g^i \d \dir
	\d \energy
	-
	s(u_g^j \varphi_g^j, v_g \varphi_g^i) \right)
	=
	s(u_g^+, v_g \varphi_g^i)
	+
	\ell(v_g \varphi_g^i)
\end{align}
for all $v_g \in \spaceanglespace$ and $i = 1,2, \dots, r_{\energyelement}+1$.

Currently, this takes the form of a fully coupled system of monoenergetic Boltzmann transport problems for the $r_{\energyelement}+1$ unknowns within the energy group $\energyelement$.
To simplify this structure, let $\{\energy_g^q\}_{q=1}^{r_{\kappa_g}+1}\subset\kappa_g$ denote the $r_{\kappa_g}+1$ Gauss-Legendre quadrature points on $\kappa_g$ with associated weights $\{\omega_g^q\}_{q=1}^{r_{\kappa_g}+1}\subset\mathbb{R}_{\ge 0}$.
We then select the basis functions $\{\varphi_g^i\}_{i=1}^{r_{\energyelement}+1}$ to be the unique set of polynomials which satisfy the Lagrangian property $\varphi_g^i(\energy_g^j) = \delta_{ij}$, $i,j=1,2, \ldots, r_{\energyelement}+1$, where $\delta_{ij}$ denotes the Kronecker delta. 
This quadrature is exact for polynomials of degree $2 r_{\energyelement}+1$, and so we use it to evaluate the (energy) integrals present in the bilinear form $a_{\dir}^{\energy}(\cdot,\cdot)$, meaning we replace \eqref{eq:energyDiscrete} with:
find $\left\{ u_g^j \right\}_{j=1}^{r_{\energyelement}+1} \in \spaceanglespace$ such that
\begin{align} \label{eq:energyDiscrete2}
	\omega_g^i
	\int_{\angledomain}
	a_{\dir}^{\energy_g^i}(u_g^i, v_g) \d \dir
	-
	\sum_{j=1}^{r_{\energyelement}+1} s(u_g^j \varphi_g^j, v_g \varphi_g^i)
	=
	s(u_g^+, v_g \varphi_g^i)
	+
	\ell(v_g \varphi_g^i)
\end{align}
for all $v_g \in \spaceanglespace$ and $i = 1,2, \dots, r_{\energyelement}+1$. Here, $a_{\dir}^{\energy_g^i}(\cdot,\cdot)$ is defined analogously to $a_{\dir}^{\energy}(\cdot,\cdot)$ with the coefficient data $\alpha$ and $\beta$ evaluated at the energy quadrature point $\energy_g^i$, $i = 1,2, \dots, r_{\energyelement}+1$. Furthermore, with a slight abuse of notation we have written $\left\{ u_g^i \right\}_{i=1}^{r_{\energyelement}+1}$ to also denote the solution of \eqref{eq:energyDiscrete2}, though we stress that \eqref{eq:energyDiscrete2} is an approximation of \eqref{eq:energyDiscrete}\footnote{This quadrature scheme exactly evaluates the integral when the problem data is independent of energy, otherwise it is an approximation which may be expected to be of higher order than the scheme itself when the problem data is sufficiently smooth; see~\cite{Ciarlet:1978}, for example, for a detailed discussion of the role of quadrature in finite element discretisations.}. 

We have not applied the above quadrature scheme in energy to the forcing and scattering terms, since in applications it is usually preferable to treat these terms separately.
Instead, we express the scattering term in an alternative form. 
For $w, v\in \spaceanglespace$, we define
\begin{align*}
	s_{g^{\prime},g}^{j,i}(w,v)	=
	\int_{\angledomain}
	\int_{\spacedomain}
	\int_{\angledomain}
	\Theta_{g^{\prime}, g}^{j,i}(\x, \otherdir \cdot \dir)
	w(\x,\otherdir) v(\x,\dir) \d\otherdir \d\x \d\dir ,
\end{align*}
where
\begin{align*}
	\Theta_{g^{\prime},g}^{j,i}(\x, \otherdir \cdot \dir) = 
	\int_{\kappa_g}
	\int_{\kappa_{g^{\prime}}}
	\scatterkernel(\x, \otherdir \cdot \dir, \otherenergy \to \energy)
	\varphi_{g}^i(\energy)
	\varphi_{g^{\prime}}^j(\otherenergy)
	\d\otherenergy
	\d\energy,
\end{align*}
for $g, g^{\prime} = 1,2,\ldots,N_\energy$, $i=1,2,\ldots,r_{\energyelement}+1$, and $j=1,2,\ldots,r_{\kappa_{g^\prime}}+1$.
With this notation \eqref{eq:energyDiscrete2} may be rewritten in the following equivalent form: find $\left\{ u_g^j \right\}_{j=1}^{r_{\energyelement}+1} \in \spaceanglespace$ satisfying the discrete monoenergetic radiation transport problem
\begin{align} \label{eq:multigroup}
	\omega_g^i
	\int_{\angledomain}
	a_{\dir}^{\energy_g^i}(u_g^j, v_g) \d \dir
	-
	\sum_{j=1}^{r_{\energyelement}+1} s_{g,g}^{j,i}(u_g^j,v_g)
	=
	\sum_{g^\prime=1}^{g-1} \sum_{j=1}^{r_{\kappa_{g^\prime}}+1} s_{g^{\prime},g}^{j,i}(u_{g^\prime}^j,v_g)
	+
	\ell(v_g \varphi_g^i)
\end{align}
for all $v_g \in \spaceanglespace$ and $i = 1,2, \dots, r_{\energyelement}+1$. 
This yields a system of $r_{\energyelement}+1$ monoenergetic radiation transport problems to solve within each energy group, which are only coupled through the scattering operator.
Moreover, the assumed structure of the scattering kernel implies that the problems within a given energy group depend only on the solutions within the same group and from higher energy groups.

\subsection{Discrete ordinates implementation in angle}
We now focus on solving the monoenergetic radiation transport problem~\eqref{eq:multigroup} for a single energy group $g$, $g = 1,2,\ldots,N_\energy$, and energy basis function $\varphi_g^i$, $i=1,2,\ldots,r_{\energyelement}+1$.
To simplify the presentation in this section, we will use $u_h$ to denote $u_g^i$ for an arbitrary $g$ and $i$, and write \eqref{eq:multigroup} in the following simplified form: find $u_h \in \spaceanglespace$ such that
\begin{align}\label{eq:monoenergetic}
	\int_{\angledomain}
	a_{\dir}(u_h, v) \d \dir
	-
	\tilde{s}(u_h, v)
	=
	\tilde{\ell}(v)
\end{align}
for all $v \in \spaceanglespace$, where 
\begin{align*}
a_{\dir}(v,w) &= \omega_g^i a_{\dir}^{\energy_g^i}(v, w), \qquad
\tilde{s}(v, w) = \sum_{j=1}^{r_{\energyelement}+1} s_{g,g}^{j,i}(v,w), \\
 \tilde{\ell}(v) &= \sum_{g^\prime=1}^{g-1} \sum_{j=1}^{r_{\kappa_{g^\prime}}+1} s_{g^{\prime},g}^{j,i}(u_{g^\prime}^j,v)
	+
	\ell(v \varphi_g^i)
\end{align*}
for some (fixed) $g$, $g = 1,2,\ldots,N_\energy$, and some (fixed) $i$, $i=1,2,\ldots,r_{\energyelement}+1$.

For simplicity, we discuss the scheme in the context of the widely-used framework of \emph{source iteration}, although similar simplifications may be incorporated into other linear solvers; indeed, source iteration may be effectively used as a preconditioner within a GMRES solver, for example, see~\cite{radley_thesis_2023}.

We may express the problem~\eqref{eq:monoenergetic} in the following equivalent matrix form: find the vector $U \in \Re^{N}$ of coefficients with respect to a basis of $\spaceanglespace$ such that
\begin{align}\label{eq:dog:dGlinearsystem}
	A U - S U = F
\end{align}
where $A, S \in \Re^{N \times N}$ and $F \in \Re^{N}$ denote the matrix representation of the streaming and scattering operators and load term, respectively.
Source iteration simply refers to the technique of solving this linear system using the Richardson iteration:
given $U^0 \in \Re^{N}$, find $U^r \in \Re^{N}$ such that
\begin{align}\label{eq:dog:sourceIteration}
	A U^r = S U^{r-1} + F,
\end{align}
for $r =1,2, \ldots$.
It may be shown that this iteration converges to the solution of~\eqref{eq:dog:dGlinearsystem} under certain assumptions on the problem data.
The advantage of this approach is that it avoids inverting the scattering matrix, which is typically dense and highly coupled in angle.

To investigate the structure of the matrix $A$, we introduce the following notation: for an angular element $\angleelement$, $\angleelement\in\anglemesh$, we define the local element basis by $\{ \varphi_{\angleelement}^{i}\}_{i=1}^{|q_{\angleelement}|}$, where $|q_{\angleelement}|$ denotes the dimension of the polynomial space defined on $\angleelement$. Furthermore, write $\spacespace = \mbox{span} \{\varphi_{\spacedomain}^{i}\}_{i=1}^{N_{\spacedomain}}$, $N_\spacedomain = \dim(\spacespace)$. Then, noting that the underlying DGFEM does not contain any communication terms between different angular elements, the matrix $A$ has the natural nested block structure 
\begin{align*}
	A = 
	\left[
	    \begin{array}{ccccc}
	    D^1 & 0
	    \\
	    0 & D^2 & 0
	    \\
	    & 0 & \ddots
	    \\
	    && & \ddots & 0
	    \\
	    &&& 0 & D^{|\anglemesh |}
	    \end{array}
	\right],
	\quad\text{ with }\quad
	D^n = 
	\left[
	    \begin{array}{cccc}
	    D^n_{1,1} & \dots & D^n_{1, |q_{\angleelement}|}
	    \\
	    \vdots & \ddots & \vdots
	    \\
	    D^n_{|q_{\angleelement}|,1} & \dots & D^n_{|q_{\angleelement}|,|q_{\angleelement}|}
	    \end{array}
	\right],
\end{align*}
where $|\anglemesh | = \card(\anglemesh )$ and, for $n=1,2,\ldots,|\anglemesh |$, 
$
D^n_{i,j}
=
\int_{\angleelement}
	\varphi_{\angleelement}^i (\dir) \varphi_{\angleelement}^j (\dir)  
	A_{\dir}
\d \dir,
$
$i,j = 1,2,\ldots,|q_{\angleelement}|$,
where
$A_{\dir} \in \Re^{N_{\spacedomain} \times N_{\spacedomain}}$, with $(A_{\dir})_{i,j} = a_{\dir}(\phi_\spacedomain^j , \phi_\spacedomain^i )$, $i,j = 1,2, \ldots, N_{\spacedomain}$.
Solving~\eqref{eq:dog:sourceIteration} therefore requires inverting each diagonal block $D^n$, $n=1,2,\ldots,|\anglemesh |$, which corresponds to solving a coupled system of spatial transport problems on each angular element.

By working once again as in Section~\ref{sec:multigroup_implementation}, this algorithm can be made significantly more efficient. 
To enable this, we restrict the angular mesh to only consist of tensor-product elements, with local element spaces consisting of tensor-product polynomials. 
We can therefore define a basis on each angular element $\angleelement \in \anglemesh$ which satisfies the Lagrangian property with respect to a tensor-product Gauss-Legendre quadrature scheme, simply by using the tensor product of the 1D bases constructed above for the energy discretisation. 
Given the reference element $\refangleelement$, let $\{(\hat{\dir}_q, \hat{\omega}_q)\}_{q=1}^{|q_{\angleelement}|}$ (where $|q_{\angleelement}| = (q_{\angleelement}+1)^{d-1}$) denote the tensor-product Gauss-Legendre quadrature scheme with $q_{\angleelement}+1$ points in each direction. 
As in the 1D case, this scheme exactly integrates polynomials in the space $\mathbb{Q}_{2q_{\angleelement} + 1}(\refangleelement)$. 

On the reference element $\refangleelement$, let $\{\hat{\varphi}_i\}_{i=1}^{|q_{\angleelement}|}$ denote the Lagrangian basis for $\mathbb{Q}_{q_{\angleelement}}(\refangleelement)$ constructed with respect to the Gauss-Legendre quadrature points $\hat{\dir}_q$, $q=1,2,\ldots,|q_{\angleelement}|$, which uniquely satisfies $\hat{\varphi}_i(\hat{\dir}_j) = \delta_{ij}$, $i,j = 1,2,\ldots,|q_{\angleelement}|$. On each angular element $\angleelement$, $\angleelement\in \anglemesh$, we map the local basis defined on the reference element to $\angleelement$ based on employing the mapping $F_{\angleelement}$; more precisely, this yields the local basis $\{\varphi^i_{\angleelement} = \hat{\varphi}_i \circ F_{\angleelement}^{-1} \}_{i=1}^{|q_{\angleelement}|}$ on $\angleelement$. Furthermore, the quadrature scheme on $\angleelement$, $\angleelement\in \anglemesh$, is given by $(\dir_q,\omega_q)_{q=1}^{|q_{\angleelement}|}$, where $\dir_q = F_{\angleelement}(\hat{\dir}_q)$,  $\omega_q = \hat{\omega}_q \jacobian(\hat{\dir}_q)$, $q=1,2,\ldots,|q_{\angleelement}|$, and $\jacobian$ denotes the square root of the determinant of the first fundamental form of the mapping $F_{\angleelement}$. Hence, the mapped basis retains the Lagrangian property of the reference basis.

Using this quadrature to approximate the angular integrals in the first term on the left-hand side of \eqref{eq:monoenergetic}, corresponding to the streaming operator, we deduce that
\begin{align*}
	D^n \approx
	\left[
	    \begin{array}{ccccc}
	    \omega_1
		A_{\dir_1} & 0 &
	    \\
	    0 & \omega_2
		A_{\dir_2} & \ddots
	    \\
	    &\ddots & \ddots &0
	    \\
	    &&0 & \omega_{|q_{\angleelement}|}
		A_{\dir_{N_{|q_{\angleelement}|}}}
	    \end{array}.
	\right]
\end{align*}
Consequently, with this approximation $A$ becomes a block diagonal matrix formed from block diagonal matrices where the individual blocks correspond to a single spatial transport problem.
Solving the source iteration system~\eqref{eq:dog:sourceIteration} therefore only requires the numerical solution of a set of independent spatial transport problems, one for each angular quadrature point, which may be performed in parallel.

\subsection{Full algorithm}
Combining the multigroup energy discretisation and the discrete ordinates angle discretisation described above, we arrive at the efficient algorithm for solving the problem presented in Algorithm~\ref{alg:combined}.
Here, we require a function \texttt{GaussLegendre(}$\omega$\texttt{,}$k+1$\texttt{)} which provides the set of points within the one- or two-dimensional element $\omega$ consisting of $k+1$ points in each dimension, or the mapped analogue for an element on the spherical surface.
The function \texttt{weight} is then used to obtain the quadrature weight associated with a given quadrature point.
The notation \texttt{parfor} indicates a \texttt{for} loop where the individual iterations are independent of one another and may therefore be performed simultaneously and in parallel.

We associate a solution vector $U_{\dir,\energy}$, containing degrees of freedom with respect to the basis $\{\phi^i_\spacedomain\}_{i=1}^{N_{\spacedomain}}$ of $\spacespace$, with each pair of angular quadrature points $\dir$ and energy quadrature points $\energy$ in the natural manner described above. The DGFEM solution $u_h$ is therefore obtained by summing these solution vectors weighted by the space, angle and energy basis functions.

The general structure of the algorithm is to iterate through energy groups in order of decreasing energy, and apply the discrete ordinates algorithm within each group.
We note that the solutions associated with all of the energy basis functions in a given energy group are necessarily coupled together through the scattering operator.
This coupling is quite weak, however, and source iteration reduces this to alternating between two algorithmic steps.
First, the scattering operator is evaluated (using the current solution within the energy group and the previously obtained solution from higher energy groups), which may be performed in parallel.
Second, we solve the spatial transport problem associated with each angle and energy quadrature point.
Again, these are independent problems which may be performed in parallel.

We note that this algorithm could be made more efficient by splitting up the evaluation of the scattering operator into intragroup and intergroup components as in~\eqref{eq:multigroup}, although we do not pursue this here to keep the presentation of the algorithm as simple as possible.

\algrenewcommand\algorithmicindent{1em}%
\begin{algorithm}
\begin{algorithmic}
	\State
	$
	\text{\textbf{inputs:}}
	\begin{cases}
		\text{Energy, angle and space meshes: } &\mesh[\energydomain], \mesh[\angledomain], \mesh[\spacedomain],
		\\
		\text{Polynomial degree vectors: } & {\bf r}, {\bf q}, {\bf p}
		\\
		\text{Source and boundary data: } &f, g,
		\\
		\text{Number of source iterations: } &N \geq 1
	\end{cases}
	$
	\State
	$
	\text{\textbf{initialise }}
		\text{solution vectors } U_{\dir_m,\energy_l}^0 = 0 \in \Re^{N_{\spacedomain}} \text{ for each angle and energy quadrature point }$ \\ $\dir_m \text{ and } \energy_l
	$
	\For{energy group $\energyelement$ with $g \in \{1, \dots, N_{\energydomain}\}$}
		\For{source iteration $t \in \{ 1,\dots,N \}$}
  
			\ParFor{energy quadrature points $\energy_l \in \texttt{GaussLegendre(} \energyelement \texttt{,} r_{\energyelement}+1 \texttt{)}$}
				\ParFor{angular quadrature points $\dir_m \in \bigcup_{\angleelement \in \mesh[\angledomain]} \texttt{GaussLegendre(} \angleelement \texttt{,} q_{\angleelement}+1 \texttt{)}$}

                    \State {Evaluate the scattering operator
					$S_{\dir}^{\energy} \in \Re^{N_{\spacedomain}}$:
                    $$
                        (S_{\dir}^{\energy})_i = s(u_h^{t-1},\varphi_{\spacedomain}^i \varphi_g^l \varphi_{\angleelement}^m)
                    $$
                    }
     
				\EndParFor
			\EndParFor
  
			\ParFor{energy quadrature points $\energy_l \in \texttt{GaussLegendre(}\energyelement \texttt{,} r_{\energyelement}+1 \texttt{)}$}

				\ParFor{angular quadrature points $\dir_m \in \bigcup_{\angleelement \in \mesh[\angledomain]} \texttt{GaussLegendre(} \angleelement \texttt{,} q_{\angleelement} +1 \texttt{)}$}
					\State{
					   Assemble:
                        \begin{align*}
							&\text{transport matrix } A_{\dir}^{\energy} \in \Re^{N_{\spacedomain} \times N_{\spacedomain}} \text{ with } (A_{\dir}^{\energy})_{i,j} = a_{\dir}^{\energy}(\varphi_{\spacedomain}^i , \varphi_{\spacedomain}^j),
							\\
							&\text{source vector } F_{\dir}^{\energy} \in \Re^{N_{\spacedomain}} \text{ with } (F_{\dir}^{\energy})_i = \ell_{\dir}^{\energy}(\varphi_{\spacedomain}^i \varphi_g^l \varphi_{\angleelement}^m).
					   \end{align*}
                    }
					\State{
    					Solve for $U_{\dir,\energy}^{t}$ satisfying:
    					$$
                            A_{\dir}^{\energy} U_{\dir_m,\energy_l}^{t} = \texttt{weight(}\energy\texttt{)}^{-1} \texttt{weight(}\dir\texttt{)}^{-1} (F_{\dir}^{\energy} + S_{\dir}^{\energy})
                        $$
                    }
				\EndParFor
			\EndParFor
		\EndFor
	\EndFor
 
	\State {\textbf{return} angular flux vectors $U_{\dir_m,\energy_l}^{t}$ for each $\dir_m$, $\energy_l$}
\end{algorithmic}
\caption{High order multigroup discrete ordinates implementation of the DGFEM scheme}
\label{alg:combined}
\end{algorithm}

\section{Numerical results}\label{sec:numerics}

In this section we present the results from a series of computational experiments designed to numerically investigate the asymptotic convergence behaviour of the proposed method for both polyenergetic and monoenergetic problems.
The deal.II finite element library in \cite{dealii} was used for the implementation of the method in these numerical examples.

\subsection{Example 1: Polyenergetic problem in 2D}

In this example we consider the numerical approximation of the polyenergetic problem \eqref{eq:pde} posed in a two-dimensional spatial domain, i.e., $d=2$, with a one-dimensional angular domain and a one-dimensional energy domain.
To this end, the spatial domain is defined as $\Omega = (0,1)^2$ (in units of m) and the energy domain is $\energydomain = (500$keV$,1000$keV$)$. 
Furthermore, the macroscopic total absorption cross-section $\alpha$ and the differential scattering cross-section $\theta$ are chosen to mimic Compton scattering of photons travelling through water, see \cite{Davisson:1952}, albeit in a two-dimensional setting.
This is achieved by setting $\alpha = 0$ and
$$ 
\theta(\mathbf{x},\bm{\mu}'\rightarrow\bm{\mu},E'\rightarrow E) = \rho(\mathbf{x}) \sigma_{KN}(E',E,\bm{\mu}\cdot\bm{\mu}') \delta(F(E',E,\bm{\mu}\cdot\bm{\mu}')), 
$$
where $\rho(\mathbf{x}) \approx 3.34281\times10^{29}$e/m$^{3}$ is the electron density of water, and $\sigma_{KN}$ is the Klein-Nishina differential scattering cross-section, see \cite{Davisson:1952}, defined by
$$ 
\sigma_{KN}(E,E',\cos\phi) = \frac12 r_e^2 \left(\frac{E'}{E}\right)^2 \left( \frac{E'}{E} + \frac{E}{E'} - \sin^2\phi \right), 
$$
with $r_e \approx 2.81794\times10^{-15}$m. Further, $\delta$ denotes the Dirac delta distribution and 
$$ 
F(E,E',\cos\phi) = E' - \frac{E}{1+\frac{E}{511}(1-\cos\phi)}, 
$$
is used to enforce the conservation of particle momentum.
Finally, $f$ and $\bc$ are selected so that the analytical solution to \eqref{eq:pde} is given by 
$$ 
u(\mathbf{x},\bm{\mu},E) 
= {\rm e}^{ -\left( \nicefrac{E \bm{\mu}\cdot\mathbf{x}}{E_{max}}\right)^2} \ 
{\rm e}^{ -(1-(\nicefrac{E}{E_{max}})^2)^{-1}}, 
$$
where $E_{max} = 1000$keV.

\begin{figure}[t]
                                \centering
        \begin{tikzpicture}
                \begin{axis}[xmode=log,
                                         ymode=log,
                                         xlabel=$N$,
                                         ylabel=Error, 
                                         width=0.8\textwidth, 
                                         axis background/.style={fill=gray!0}, 
                                         legend pos=south west,
                                         grid=both,
                                         grid style={line width=.1pt, draw=gray!10},
                                         major grid style={line width=.2pt,draw=gray!50}]
                        \addplot+[mark=square, thick, dashed, black, mark options={black, solid}] table [x=n_dofs, y=l2_error, col sep=comma] {numerics/poly_2d_data/l2_errors_0_poly_2d.csv};
                        \addplot+[mark=square*, thick, solid, black, mark options={black}] table [x=n_dofs, y=dg_error, col sep=comma] {numerics/poly_2d_data/dg_errors_0_poly_2d.csv};
                        \addplot+[mark=diamond, thick, dashed, blue, mark options={blue, solid}] table [x=n_dofs, y=l2_error, col sep=comma] {numerics/poly_2d_data/l2_errors_1_poly_2d.csv};
                        \addplot+[mark=diamond*, thick, solid, blue, mark options={blue}] table [x=n_dofs, y=dg_error, col sep=comma] {numerics/poly_2d_data/dg_errors_1_poly_2d.csv};
                        \addplot+[mark=o, thick, dashed, red, mark options={red, solid}] table [x=n_dofs, y=l2_error, col sep=comma] {numerics/poly_2d_data/l2_errors_2_poly_2d.csv};
                        \addplot+[mark=*, thick, solid, red, mark options={red}] table [x=n_dofs, y=dg_error, col sep=comma] {numerics/poly_2d_data/dg_errors_2_poly_2d.csv};
                        
                        \legend{{$p=0$, $L_2(\domain)$-norm}, {$p=0$, DGFEM-norm},
                                        {$p=1$, $L_2(\domain)$-norm}, {$p=1$, DGFEM-norm},
                                        {$p=2$, $L_2(\domain)$-norm}, {$p=2$, DGFEM-norm}};
                                        
                        \addplot[mark=none, solid, black] coordinates {(3e6,5e-2) (3e7,5e-2) (3e7,3.75e-2) (3e6,5e-2)};
                        \addplot[mark=none, solid, black] coordinates {(3e6,9e-3) (3e6,5.06e-3) (3e7,5.06e-3) (3e6,9e-3)};
                        \plot[mark=none] (1e7,5e-2) node[anchor=south] {-0.125}; 
                        \plot[mark=none] (1e7,5.06e-3) node[anchor=north] {-0.25}; 
                        \addplot[mark=none, solid, black] coordinates {(5e7,5e-4) (5e8,5e-4) (5e8,2.11e-4) (5e7,5e-4)};
                        \addplot[mark=none, solid, black] coordinates {(5e7,1e-4) (5e7,3.16e-5) (5e8,3.16e-5) (5e7,1e-4)};
                        \plot[mark=none] (2e8,5e-4) node[anchor=south] {-0.375}; 
                        \plot[mark=none] (2e8,3.16e-5) node[anchor=north] {-0.5}; 
                        \addplot[mark=none, solid, black] coordinates {(2e8,8e-6) (2e9,8e-6) (2e9,1.90e-6) (2e8,8e-6)};
                        \addplot[mark=none, solid, black] coordinates {(2e8,3e-6) (2e8,5.33e-7) (2e9,5.33e-7) (2e8,3e-6)};
                        \plot[mark=none] (6e8,8e-6) node[anchor=south] {-0.625}; 
                        \plot[mark=none] (6e8,5.33e-7) node[anchor=north] {-0.75}; 
                \end{axis}
        \end{tikzpicture}
        \label{fig:polyenergy_2d_errors}

\caption{Example 1: Convergence of the DGFEM under $h$--refinement for $p=0,1,2$. Here, the DGFEM-norm is defined in~\eqref{eq:dGnorm}.} \label{ex1:h-refine}
\end{figure}
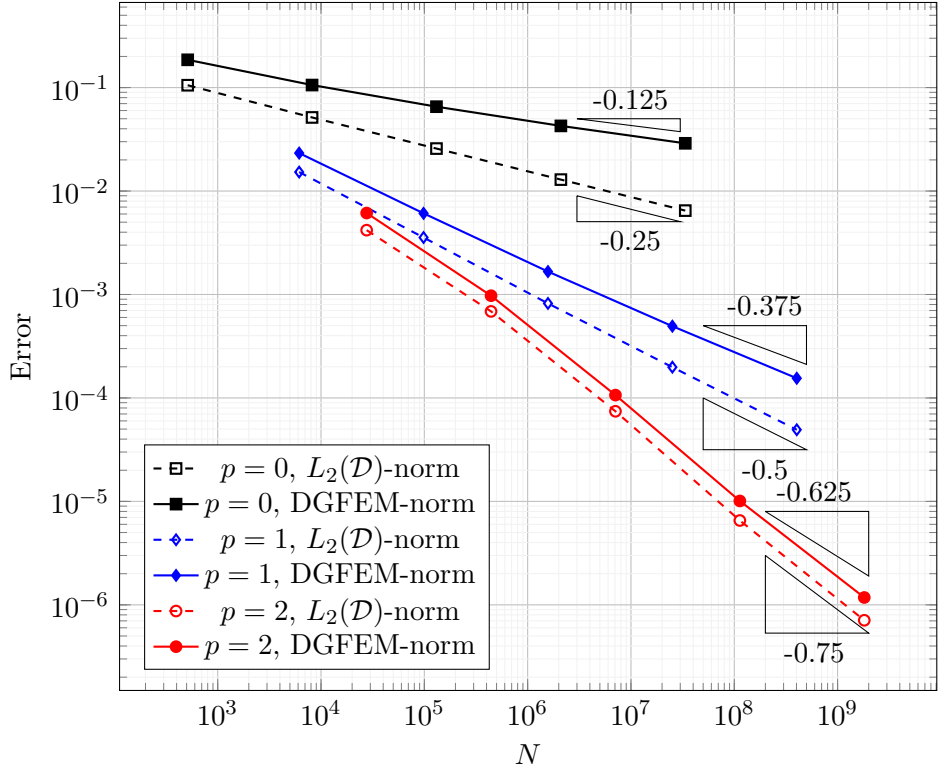

We investigate the asymptotic behaviour of the proposed DGFEM on a sequence of successively finer meshes for different values of the polynomial degrees. 
To this end, the spatial meshes are (non-nested) polygonal grids generated using the Polymesher software package \cite{polymesher}. 
As noted in Section~\ref{sec:angulare_discretisation} the angular meshes are formed by mapping uniform interval elements, defined on the boundary of the square $(-1,1)^2$ to the unit circle $\angledomain$. 
We set polynomial degrees $p_{\element} = p$ for all $\element\in\spacemesh$, $q_{\angleelement} = p$ for all $\angleelement\in\anglemesh$, and $r_{\energyelement} = p$ for all $\energyelement\in\energymesh$.
Figure \ref{ex1:h-refine} shows the error, measured in terms of both the $L_2(\domain)$ and DGFEM-norm, against the number of degrees of freedom (denoted by $N$) in the underlying finite element space $\discretespace$. 
Writing $d_{\domain}$ to denote the dimension of the domain $\domain=\spacedomain\times
\angledomain\times\energydomain$ (here, $d_{\domain}=4$), we clearly observe that $\|u-u_h \|_{L_2(\domain)} \sim {\mathcal O}(N^{\nicefrac{(p+1)}{d_\domain}})$ as the space-angle-energy mesh $\mesh$ is uniformly refined for each fixed $p$. 
Equivalently, since $h \sim N^{-1/d_\domain}$, where $h$ denotes the meshsize of $\mesh$, we note that $\|u-u_h \|_{L_2(\domain)} \sim {\mathcal O}(h^{p+1})$ as $h$ tends to zero for each fixed $p$. 
This is the expected optimal rate of convergence with respect to the $L_2(\domain)$-norm, though this rate of convergence for the DGFEM approximation of first-order hyperbolic PDEs is not guaranteed on general meshes, for further details see \cite{peterson1991note} and the remarks in \cite{cangiani2015hp}. 
Secondly, from Figure \ref{ex1:h-refine} we also observe that for fixed $p$, $p=0,1$, that the DGFEM-norm of the error behaves like ${\mathcal O}(N^{\nicefrac{(p+1/2)}{d_\domain}})$, or equivalently ${\mathcal O}(h^{p+1/2})$, as the meshsize $h$ tends to zero. This is in full agreement with Theorem~\ref{thm:apriori} (see also Remark~\ref{remark:convergence}). In the case when $p=2$, we observe that $\triplenorm{u-u_h}$ converges at a slightly faster rate as $h$ tends to zero; despite the large number of degrees of freedom in $\discretespace$, the meshes are relatively coarse and hence we expect that we are still in the pre-asymptotic regime.

\subsection{Example 2: Monoenergetic problem in 3D}

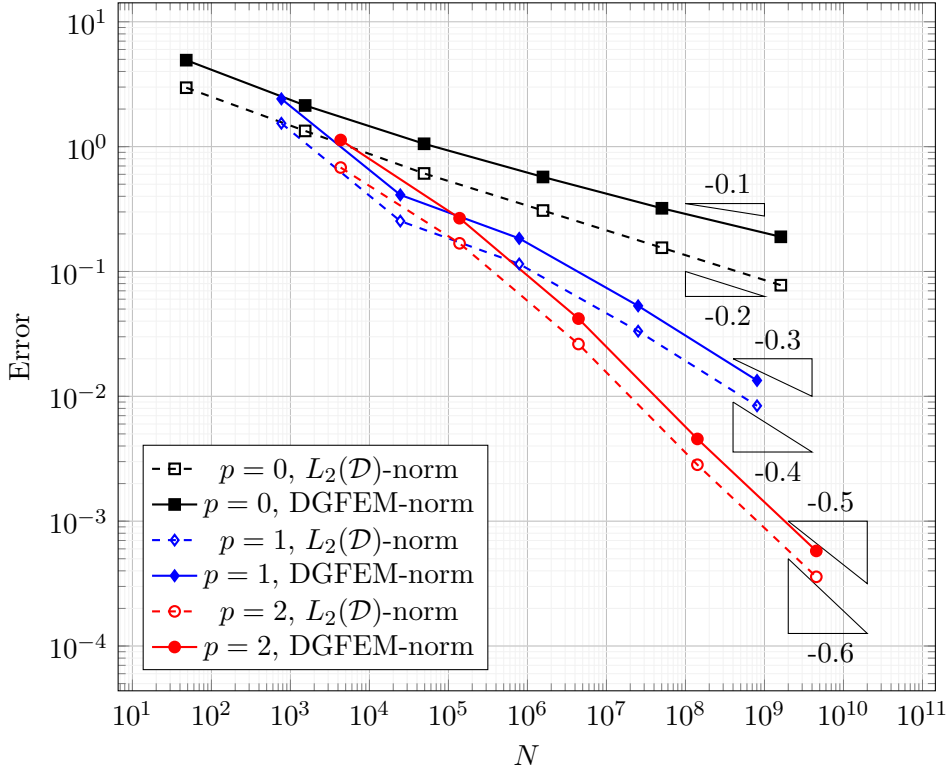
\begin{figure}[t]
                                \centering
        \begin{tikzpicture}
                \begin{axis}[xmode=log,
                                         ymode=log,
                                         xlabel=$N$,
                                         ylabel=Error,
                                         width=0.8\textwidth,
                                         axis background/.style={fill=gray!0},
                                         legend pos=south west,
                                         grid=both,
                                         grid style={line width=.1pt, draw=gray!10},
                                         major grid style={line width=.2pt,draw=gray!50}]
                        \addplot+[mark=square, thick, dashed, black, mark options={black, solid}] table [x=n_dofs, y=l2_space_angle_error, col sep=comma] {numerics/mono_3d_data/errors_0.csv};
                        \addplot+[mark=square*, thick, solid, black, mark options={black}] table [x=n_dofs, y=dg_norm_error, col sep=comma] {numerics/mono_3d_data/errors_0.csv};
                        \addplot+[mark=diamond, thick, dashed, blue, mark options={blue, solid}] table [x=n_dofs, y=l2_space_angle_error, col sep=comma] {numerics/mono_3d_data/errors_1.csv};
                        \addplot+[mark=diamond*, thick, solid, blue, mark options={blue}] table [x=n_dofs, y=dg_norm_error, col sep=comma] {numerics/mono_3d_data/errors_1.csv};
                        \addplot+[mark=o, thick, dashed, red, mark options={red, solid}] table [x=n_dofs, y=l2_space_angle_error, col sep=comma] {numerics/mono_3d_data/errors_2.csv};
                        \addplot+[mark=*, thick, solid, red, mark options={red}] table [x=n_dofs, y=dg_norm_error, col sep=comma] {numerics/mono_3d_data/errors_2.csv};
                        
                        \legend{{$p=0$, $L_2(\domain)$-norm}, {$p=0$, DGFEM-norm},
                                        {$p=1$, $L_2(\domain)$-norm}, {$p=1$, DGFEM-norm},
                                        {$p=2$, $L_2(\domain)$-norm}, {$p=2$, DGFEM-norm}};
                                        
                        \addplot[mark=none, solid, black] coordinates {(1e8,3.5e-1) (1e9,3.5e-1) (1e9,2.78e-1) (1e8,3.5e-1)};
                        \addplot[mark=none, solid, black] coordinates {(1e8,1e-1) (1e8,6.31e-2) (1e9,6.31e-2) (1e8,1e-1)};
                        \plot[mark=none] (3.5e8,3.5e-1) node[anchor=south] {-0.1}; 
                        \plot[mark=none] (3.5e8,6.31e-2) node[anchor=north] {-0.2}; 
                        \addplot[mark=none, solid, black] coordinates {(4e8,2e-2) (4e9,2e-2) (4e9,1.0e-2) (4e8,2e-2)};
                        \addplot[mark=none, solid, black] coordinates {(4e8,9e-3) (4e8,3.58e-3) (4e9,3.58e-3) (4e8,9e-3)};
                        \plot[mark=none] (1.5e9,2e-2) node[anchor=south] {-0.3}; 
                        \plot[mark=none] (1.5e9,3.58e-3) node[anchor=north] {-0.4}; 
                        \addplot[mark=none, solid, black] coordinates {(2e9,1e-3) (2e10,1e-3) (2e10,3.16e-4) (2e9,1e-3)};
                        \addplot[mark=none, solid, black] coordinates {(2e9,5e-4) (2e9,1.26e-4) (2e10,1.26e-4) (2e9,5e-4)};
                        \plot[mark=none] (7e9,1e-3) node[anchor=south] {-0.5}; 
                        \plot[mark=none] (7e9,1.26e-4) node[anchor=north] {-0.6}; 
                \end{axis}
        \end{tikzpicture}
        \label{fig:monoenergy_3d_errors}
\caption{Example 2: Convergence of the method under $h$--refinement for $p=0,1,2$. Here, the DGFEM-norm is defined in~\eqref{eq:dGnorm}.}
\end{figure}

We now consider the numerical approximation of a simplified monoenergetic variant of the problem~\eqref{eq:pde}, where the energy is assumed to remain constant, posed in a three-dimensional spatial domain with a two-dimensional angular domain.
To this end, we let $\spacedomain = (0,1)^3$, $\alpha=1$, $\theta(\mathbf{x},\bm{\mu}'\rightarrow\bm{\mu}) = \nicefrac{1}{|\angledomain^{2}|} = \nicefrac{1}{4\pi}$, $\beta(\mathbf{x}) = \int_\mathbb{S} \theta(\mathbf{x},\bm{\mu}\rightarrow\bm{\mu}') \ d \bm{\mu}' = 1$, and select $f$ and $\bc$ so that the analytical solution of the underlying problem is given by
$$
u(\mathbf{x},\bm{\mu}) = \cos(4\phi)\left( x\cos y+y\sin x \right),
$$
where $\phi = \arccos\bm{\mu}_3$ denotes the polar angle of $\bm{\mu}$.

Figure~\ref{fig:monoenergy_3d_errors} shows the convergence of the DGFEM using meshes comprising of uniform cubes in the spatial domain $\Omega$ and mapped quadrilateral elements in the angular domain $\angledomain$. 
As before, we plot the error measured in both the $L_2(\domain)$-norm and the DGFEM-norm. 
As in the previous example we observe that $\|u-u_h \|_{L_2(\domain)} \sim {\mathcal O}(N^{\nicefrac{(p+1)}{d_\domain}})$, $d_\domain=5$, or equivalently $\|u-u_h \|_{L_2(\domain)} \sim{\mathcal O}(h^{p+1})$ as $h$ tends to zero for each fixed value of the polynomial degree $p$, $p=0,1,2$.
Moreover, we observe that $\triplenorm{u-u_h} \sim {\mathcal O}(N^{\nicefrac{(p+1/2)}{d_\domain}})$ ($\sim {\mathcal O}(h^{p+1/2})$) for $p=0,1$, as $h$ tends to zero.
As in the previous example, we again observe a slighter faster rate of convergence of $\triplenorm{u-u_h}$ for $p=2$, which we attribute to being in the pre-asymptotic regime.

\section{Conclusions}\label{sec:conclusion}

We have introduced a unified $hp$--version DGFEM for the numerical approximation of the linear Boltzmann transport problem. 
We have proven stability and convergence results for the method, through an inf-sup condition in an appropriate norm, and shown how it may be efficiently implemented as a high-order version of the widely used multigroup discrete ordinates method. 
The unified DGFEM formulation in the space, angle and energy domains therefore provides a simple and flexible way of computing arbitrary order approximations of solutions to the Boltzmann transport problem for the first time.
General classes of polytopic elements are admitted for the design of the spatial computational mesh, which facilitates the accurate and efficient representation of complex geometries. 
Numerical experiments have been presented which confirm the theoretical results derived in this paper. 
Further work will include using this scheme within an $hp$-refinement mesh adaptation algorithm, and investigating problems arising in medical physics applications.

\vspace{0.3cm}
\noindent
{\bf Funding} PH and MEH acknowledge the financial support of the EPSRC (grant EP/R030707/1). PH also acknowledges the financial support of the MRC (grant MR/T017988/1). OJS is grateful for the financial support of the UKRI and EPSRC (UKRI Turing AI Fellowship ARaISE EP/V025295/1).

\bibliographystyle{acm}
\bibliography{references}

\end{document}